\newtheorem{theorem}{Theorem}[section]
\newtheorem{corollary}[theorem]{Corollary}
\newtheorem{definition}[theorem]{Definition}
\newtheorem{example}[theorem]{Example}
\newtheorem{proposition}[theorem]{Proposition}
\newtheorem{remark}[theorem]{Remark}
\newenvironment{proof}[1][Proof]{\noindent\textbf{#1.} }{\ \rule{0.5em}{0.5em}}
\def\@makecaption#1#2{\vskip\abovecaptionskip
\hb@xt@\hsize{\hfil#2\hfil}\vskip\belowcaptionskip}
\begin{document}

\title{Catalan States of Lattice Crossing: Application of Plucking Polynomial}
\author{Mieczyslaw K. Dabkowski \and Jozef H. Przytycki}
\date{}
\maketitle

\begin{abstract}
For a Catalan state $C$ of a lattice crossing $L\left( m,n\right) $ with
no returns on one side, we find its coefficient $C\left( A\right) $ in
the Relative Kauffman Bracket Skein Module expansion of $L\left( m,n\right) $. 
We show, in particular, that $C\left( A\right) $ can be found using 
the plucking polynomial of a rooted tree with a delay function associated to $C$. 
Furthermore, for $C$ with returns on one side only, we prove that $C\left( A\right) $ is a product of Gaussian
polynomials, and its coefficients form a unimodal sequence.

\noindent \textit{{Keywords}: \textup{\ Catalan States, Gaussian
Polynomial, Knot, Kauffman Bracket, Link, Lattice Crossing, Plucking
Polynomial, Rooted Tree, Skein Module, Unimodal Polynomial} \bigskip }

\textit{\noindent \textup{Mathematics Subject Classification 2000: Primary
57M99; Secondary 55N, 20D}}
\end{abstract}

\section{Introduction\label{Sec_0}}

Our work finds its roots in the theory of skein modules of $3$-manifolds. 
Let us recall some standard notation and terminology.  $%
F_{g,n}$ denotes an oriented surface of genus $g$ with $n$ boundary
components, $I=\left[ 0,1\right]$, and  $M^{3}=F_{g,n}\times I$. The \emph{KBSM}\footnote{%
For an oriented $3$-manifold $M^{3},$ \emph{Kauffman Bracket Skein Module} (%
\emph{KBSM}) was defined in $1987$ by J. H. Przytycki (\cite{JHP}).}\emph{\ }%
of $M^{3}$ has a structure of an algebra $%
S_{2,\infty }\left( M^{3}\right) $ (\emph{Skein Algebra }of $M^{3}$), with the
multiplication defined by placing a link $L_{1}$ above a link $L_{2}$. The algebra 
$S_{2,\infty }\left( M^{3}\right) $ appears naturally in a study of
quantizations of $\mathrm{SL}(2,\mathbb{C})$-character varieties of
fundamental groups of surfaces \cite{Bulock,PS}. Therefore, in several
important cases $\left( g,\text{ }n\right) \in \left\{ \left( 0,2\right) ,%
\text{ }\left( 0,3\right) ,\text{ }(1,\text{ }0),\text{ }(1,\text{ }1),\text{
}(1,\text{ }2)\right\} $, these algebras have been well
studied and understood (see, for instance Theorem $2.1$, Corollary $2.2$,
and Theorem $2.2$ in \cite{BP}). Furthermore, a very elegant formula for the
product in $S_{2,\infty }\left( F_{1,0}\times I\right) $ was found by
Charles Frohman and Razvan Gelca in \cite{FG}. Inspired by these results, we started our quest 
for a formula of a similar type in $S_{2,\infty}\left( F_{0,4}\times I\right) $\footnote{%
For $M^{3}=F_{0,4}\times I$, the presentation of the algebra $S_{2,\infty
}\left( M^{3}\right) $ was found by D. Bullock and J.H. Przytycki (see
Theorem $3.1$ in \cite{BP}).} (see \cite{DLP}). In this paper, we unravel a
surprising connection between our skein algebra, Gaussian polynomials and
rooted trees.\smallskip

The Relative Kauffman Bracket Skein Module $\left( RKBSM\right) $ was
defined in \cite{JHP}. As shown in \cite{JP}, $RKBSM$ of $\mathrm{D}%
^{2}\mathrm{\times I}$ with $2k$ fixed points on $\partial
\left( \mathrm{D}^{2}\mathrm{\times I}\right) $ is a free module with a
basis consisting of all crossingless connections $C$ (called \emph{Catalan states}) between 
boundary points. Let $\mathrm{R}_{m,n}^{2}$ be an $m\times n$-rectangle
and $N^{3}=\mathrm{R}_{m,n}^{2}\times \mathrm{I}$. We fix $2\left(
m+n\right) $ points $\left( x_{i},\text{ }\frac{1}{2}\right)$, $\left(
x_{i}^{\prime },\text{ }\frac{1}{2}\right)$, $i=1$, $2$, $...$, $n$; $%
\left( y_{j},\text{ }\frac{1}{2}\right)$, $\left( y_{j}^{\prime },\text{ }%
\frac{1}{2}\right)$, $j=1$, $2$, $...$, $m$ on $\partial \left(
N^{3}\right) $, so that their projection onto $\partial \left( \mathrm{R}%
_{m,n}^{2}\right) $ is as in \textrm{Figure}~\ref{fig:Rectangle}. Let $%
\mathcal{S}_{2,\infty }^{\left( m,n\right) }\left( N^{3}\right) $ denote the 
$RKBSM$ of $N^{3}$ with the $2\left( m+n\right) $ points fixed. Consider $%
m\times n$ tangle $L\left( m,n\right) $ $(m\times n$-\emph{lattice crossing}$%
)$ obtained by placing $n$ vertical parallel lines above $m$ horizontal parallel lines 
as in \textrm{Figure}~\ref{fig:RactangleLine}. 
\begin{figure}[h]
\centering
\begin{minipage}{.35\textwidth}
\centering
\includegraphics[width=\linewidth]{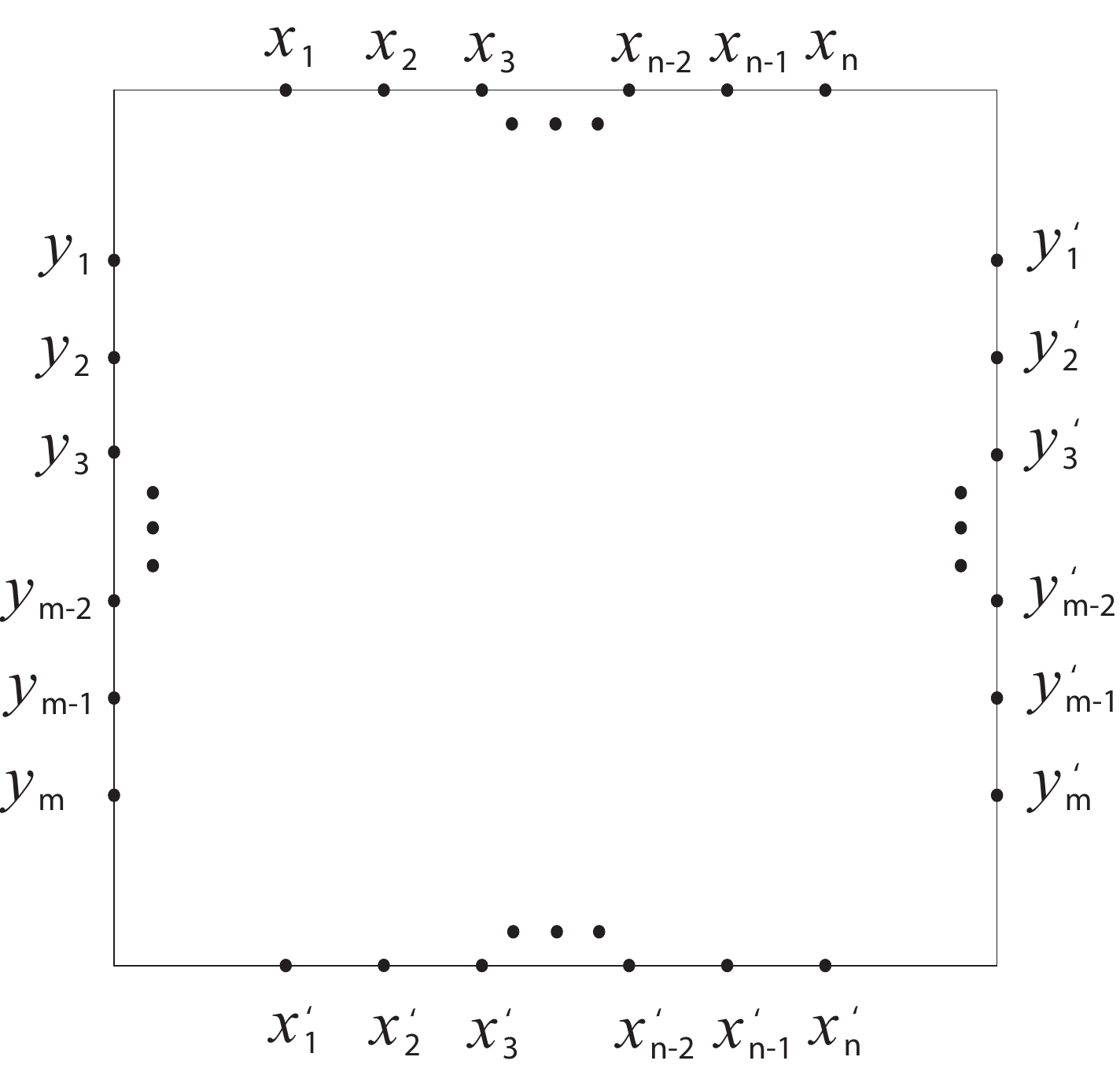}
\caption{Figure 1.1}
\label{fig:Rectangle}
\end{minipage}\hfill 
\begin{minipage}{.35\textwidth}
\centering
\includegraphics[width=\linewidth]{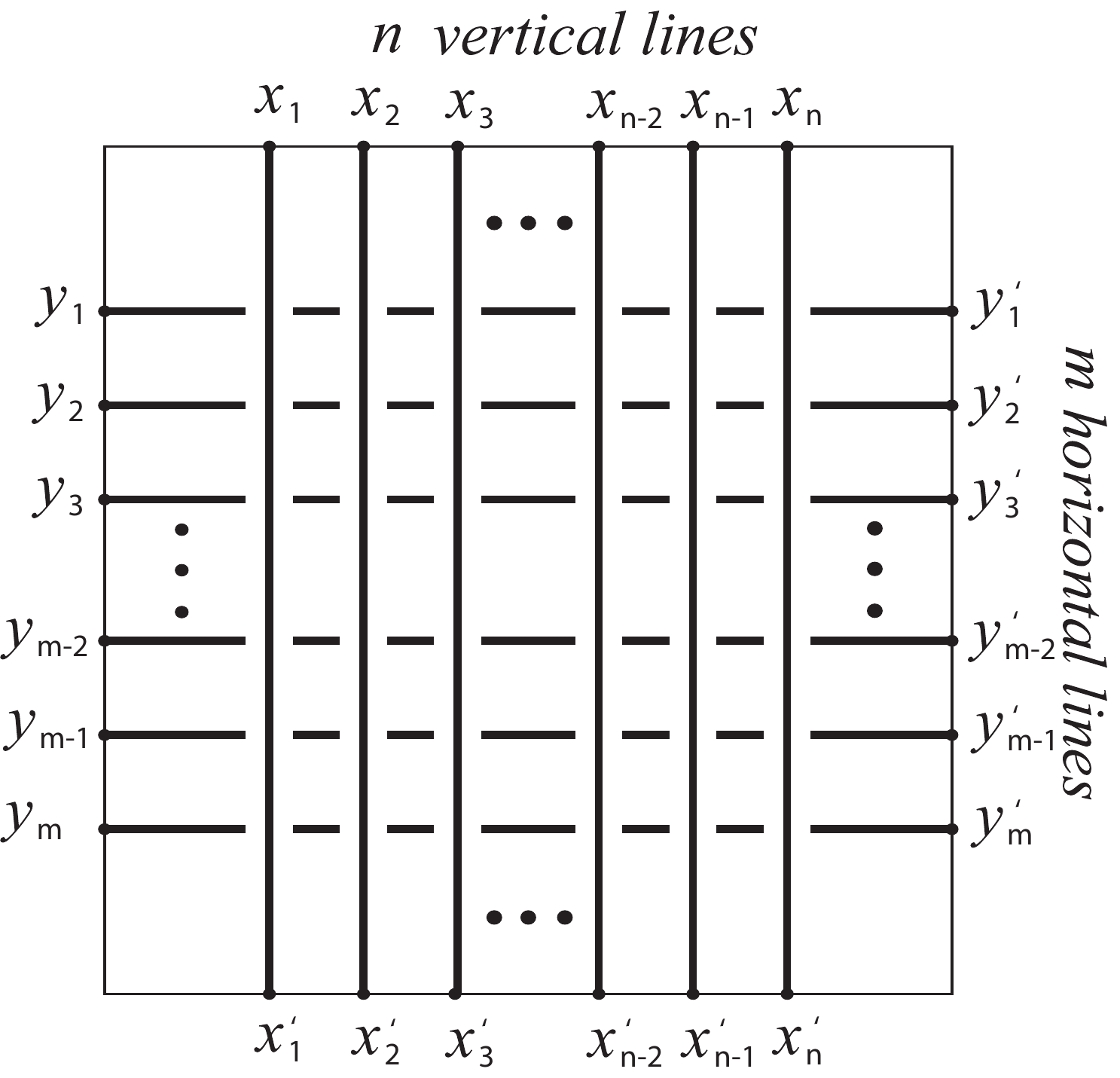}
\caption{Figure 1.2}
\label{fig:RactangleLine}
\end{minipage}\hfill 
\begin{minipage}{.27\textwidth}
\centering
\includegraphics[width=\linewidth]{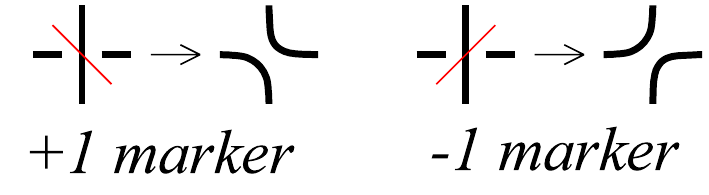}
\caption{Figure 1.3}
\label{fig:Markers}
\end{minipage}
\end{figure}

$\mathrm{Cat}\left( m,n\right) $ shall denote the set of all crossingless
connections between $2\left( m+n\right) $ points on $\partial \left(
N^{3}\right) $. We can place an order on $\mathrm{Cat}\left( m,n\right) $ so that it becomes a basis of $%
\mathcal{S}_{2,\infty }^{\left( m,n\right) }\left( N^{3}\right) $ (see \cite%
{JP}), and now we have%
\begin{equation*}
L\left( m,n\right) =\sum_{C\in \mathrm{Cat}\left( m,n\right) }C\left(
A\right) C,
\end{equation*}%
for some Laurent polynomials $C\left( A\right) \in \mathbb{Z}\left[ A^{\pm 1}%
\right] $. In this paper we aim to determine $C\left( A\right) $,
for all Catalan states $C$ with no returns on a fixed side of $\mathrm{R}_{m,n}^{2}$.\smallskip

Let $\mathcal{K}\left( m,n\right) $ be the set of all Kauffman states of $%
L\left( m,n\right) $, i.e. the set of all choices of positive and negative markers for $mn$
crossings as in \textrm{Figure} ~\ref{fig:Markers}. For $s$ in $\mathcal{K}\left( m,n\right) $, denote by $D_{s}$ its corresponding
diagram obtained by smoothing all crossings of $L\left( m,n\right) $ according to $s$. Furthermore, 
let $\left\vert D_{s}\right\vert $ be the number of closed components of $D_{s}$
and $C_{s}$ be the Catalan state resulting from removal of said components. Define the function $\mathbf{K}:\mathcal{K}\left(
m,n\right) \rightarrow \mathrm{Cat}\left( m,n\right) $ by $\mathbf{K}\left(
s\right) =C_{s}$. For $L\left( m,n\right) $, the Kauffman state sum is given by:
\begin{equation*}
L\left( m,n\right) =\sum_{s\in \mathcal{K}\left( m,n\right)
}A^{p(s)-n(s)}(-A^{2}-A^{-2})^{|D_{s}|}\mathbf{K}\left( s\right) ,
\end{equation*}%
where $p(s)$ and $n(s)$ stand for the number of positive and negative
markers determined by $s$, respectively. Hence, for $C\in \mathrm{Cat}%
\left( m,n\right)$, its coefficient is given by:%
\begin{equation*}
C\left( A\right) =\sum_{s\in \mathbf{K}^{-1}\left( C\right)
}A^{p(s)-n(s)}(-A^{2}-A^{-2})^{^{|D_{s}|}}
\end{equation*}%
if $\mathbf{K}^{-1}\left( C\right) \neq \emptyset $ ($C$ is a \emph{%
realizable} Catalan state), otherwise $C\left( A\right) =0$ ($C$ is \emph{%
non-realizable}$)$. In \cite{DLP} we gave necessary and sufficient
conditions for $\mathbf{K}^{-1}\left( C\right) \neq \emptyset $ and found 
a closed form formula for the number of realizable Catalan states. 
Furthermore, for $C$ with no arcs starting and ending on the same side of $R^{2}_{m,n}$, 
a formula for $C\left( A\right) $ was also obtained\footnote{%
In fact, it was derived by J. Hoste and J. H. Przytycki  in $1992$
while writing \cite{H-P-2} although it was not included in the final version of their
paper. Also, as we were told by M. Hajij, a related formula was noted by S.
Yamada \cite{SY}, \cite{Haj}.} (see \cite{DLP}). Let $\mathrm{Cat}%
_{F}\left( m,n\right) $ denote the subset of all realizable Catalan states of $L\left(
m,n\right) $ that have no returns on the floor of $\mathrm{R}_{m,n}^{2}$. 
In this paper, we consider a submodule of $\mathcal{S}%
_{2,\infty }^{\left( m,n\right) }\left( N^{3}\right) $ generated by $\mathrm{%
Cat}_{F}\left( m,n\right) $ along with the projection $L_{F}\left( m,n\right) $ of $%
L\left( m,n\right) $ onto the aforementioned submodule. In particular, we show the following:

\begin{description}
\item[1)] For $C\in \mathrm{Cat}_{F}\left( m,n\right) $, its coefficient $%
C\left( A\right) $ can be determined using the plucking polynomial $Q$
of a rooted tree with a delay function\footnote{Polynomial $Q$ was
defined in \cite{JHP-1}, \cite{JHP-2} and its properties were explored in 
depth in \cite{CMPWY1}, \cite{CMPWY2}, and \cite{CMPWY3}.} associated to $C$.

\item[2)] For $C\in \mathrm{Cat}_{F}\left( m,n\right) $ with returns allowed only
on the ceiling of $\mathrm{R}_{m,n}^{2}$, the coefficient $C\left( A\right) $ is a product of
Gaussian polynomials and its coefficients form a unimodal sequence.
\end{description}

The paper is organized as follows. In the second section, we define a poset
associated with Kauffman states $s\in \mathbf{K}^{-1}\left( C\right) $ and
use it to compute $C\left( A\right) $. Next, in section three, for a Catalan
state \newline $C\in \mathrm{Cat}_{F}\left( m,n\right) $, we define
a rooted tree $\mathcal{T}\left( C\right) $ with a delay function and the
plucking polynomial $Q\left( \mathcal{T}\left( C\right) \right) $. We show
that $C\left( A\right) $ can be computed using $Q\left( \mathcal{T}\left(
C\right) \right) $, the highest and the lowest coefficients of $C\left( A\right) $ are equal to one, and all its coefficients in between are positive. 
In section four, we discuss several important properties of $C\left( A\right) $. 
In particular, using results of \cite{JHP-1}, \cite{JHP-2}, and \cite{CMPWY3}, for $C$ with returns on its 
ceiling only, we obtain as corollaries that $C\left(A\right) $ has unimodal coefficients, and we give criterion for a Laurent 
polynomial to be $C\left( A\right)$. Finally, in the last
section, we give a closed formula for $C\left( A\right) $, where $C\in \mathrm{Cat}\left( m,3\right) $.

\section{Poset of Kauffman States\label{Sec_2}}

In this section, for Catalan states with no returns
on the floor, we construct a poset of Kauffman states which we later
use to compute $C\left( A\right)$. Given a Catalan state $C$, its boundary is $\partial C=X\cup
X^{\prime }\cup Y\cup Y^{\prime }$, where $X=\left\{ x_{1},...,x_{n}\right\}
$, $X^{\prime }=\left\{ x_{1}^{\prime },...,x_{n}^{\prime }\right\}$, $%
Y=\left\{ y_{1},...,y_{m}\right\} $, and $Y^{\prime }=\left\{ y_{1}^{\prime
},...,y_{m}^{\prime }\right\} $ (see \textrm{Figure} ~\ref%
{fig:ExampleCatalan}). Arcs $e_{0}$ joining $y_{1}$ and $x_{1}$ and $e_{n}$
joining $x_{n}$ and $y_{1}^{\prime }$ as well as all arcs $e_{i}$ that join $%
x_{i}$ and $x_{i+1}$ $(i=1,2,...,n-1$) will be called \emph{the innermost
upper cups}. More generally, for a Catalan state $C$, we will refer to arcs with both ends
in $X$ or with one end in $X$ and
the other in either $Y$ or $Y^{\prime }$ as upper cups.

\begin{figure}[h]
\centering
\begin{minipage}{.35\textwidth}
\centering
\includegraphics[width=\linewidth]{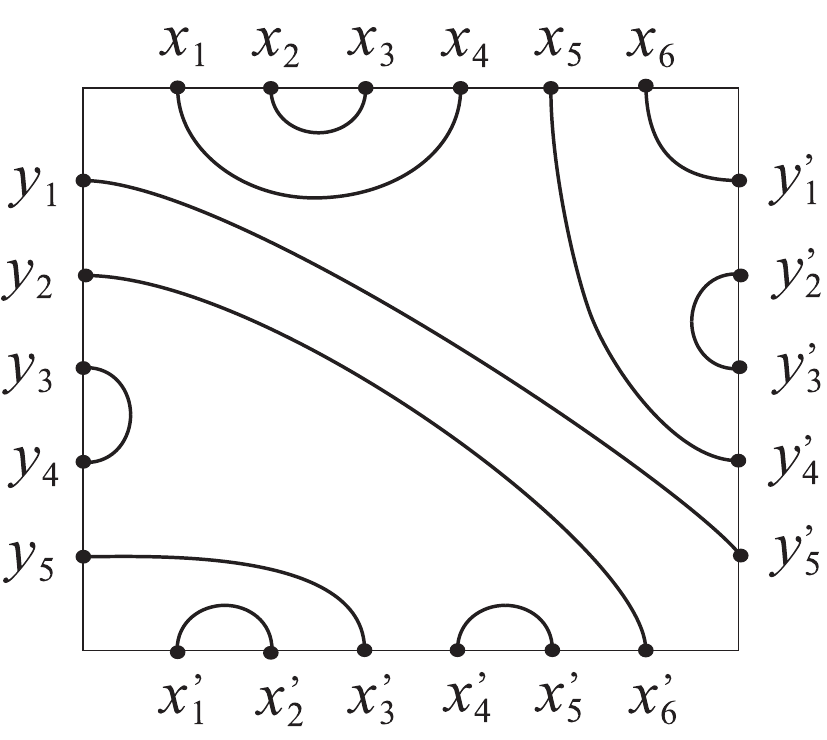}
\caption{Figure 2.1}
\label{fig:ExampleCatalan}
\end{minipage}\hspace{10mm} 
\begin{minipage}{.35\textwidth}
\centering
\includegraphics[width=\linewidth]{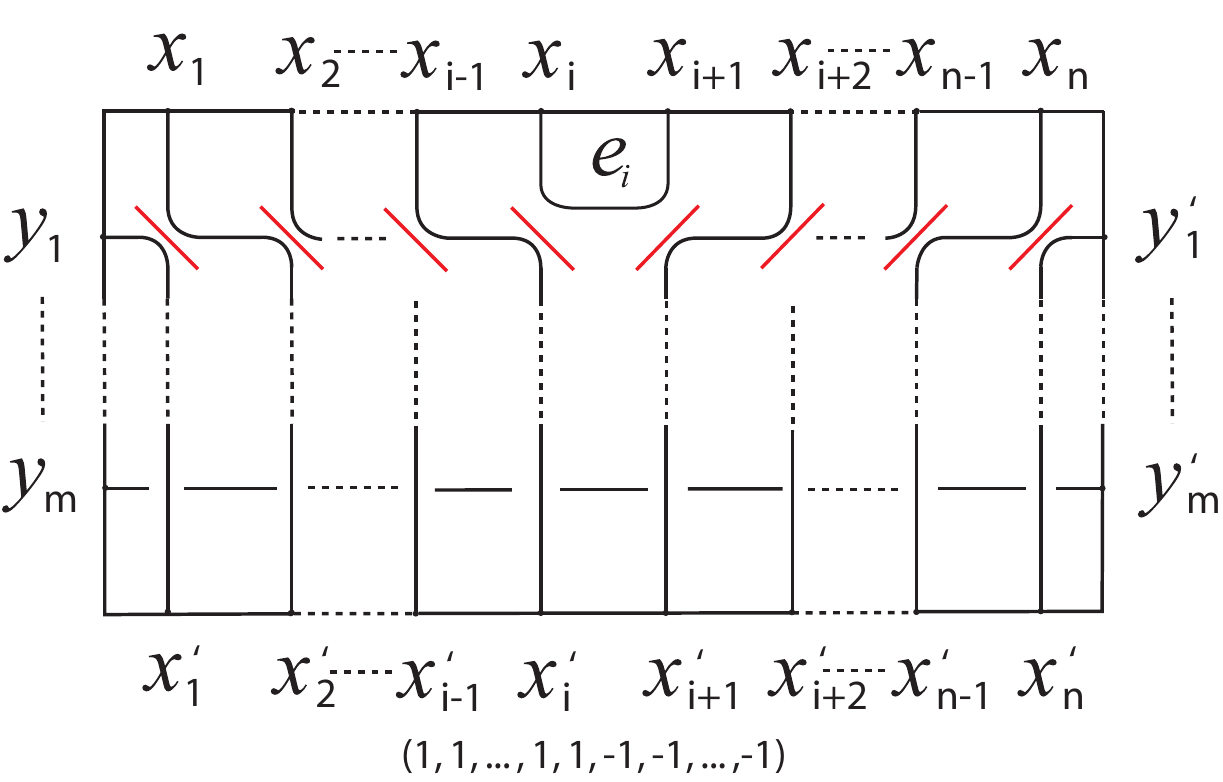}
\caption{Figure 2.2}
\label{fig:InnermostCup}
\end{minipage}l
\end{figure}

Let $C$ be a Catalan state of $L\left( m,n\right) $. We say that $C$ has no 
\emph{returns on the floor}, $C\in \mathrm{Cat}_{F}\left( m,n\right) ,$ if
none of its $\left( m+n\right) $ arcs have both ends in $X^{\prime }$. For
such a state, there are precisely $m$ arcs with none of its endpoints in $%
X^{\prime }$. Consider the set of Kauffman states $\mathbf{K}^{-1}\left(
C\right) $ of $L\left( m,n\right) $ which realize $C\in \mathrm{Cat}%
_{F}\left( m,n\right) $. Each Kauffman state $s\in \mathcal{K}\left(
m,n\right) $ can be identified with an $m\times n$ matrix $\left(
s_{i,j}\right) $, where $s_{i,j}=\pm 1$. Denote by $\mathcal{K}_{F}\left(
m,n\right) \mathcal{\ }$the subset of $\mathcal{K}\left( m,n\right) $
consisting of all states $s$ with rows $s_{i}$ in the form:%
\begin{equation*}
s_{i}=(\underset{b_{i}}{\underbrace{1,1,...,1}},\underset{n-b_{i}}{%
\underbrace{-1,...,-1}}),
\end{equation*}%
where $0\leq b_{i}\leq n$ (see \textrm{Figure}~\ref{fig:InnermostCup}). In
the next proposition, we show that $C\left( A\right) $ in $L_{F}\left( m,n\right) 
$ can be computed using only Kauffman states $s\in \mathcal{K}_{F}\left(
m,n\right) $ with $\mathbf{K}\left( s\right) =C$. In particular, this
significantly reduces the number of Kauffman states that one needs to consider%
\footnote{%
To compute $L_{F}\left( m,n\right) $ we need to consider $(n+1)^{m}$ Kauffman
states instead of $2^{mn}$.}.

\begin{proposition}
\label{CoefficientOfCatalanState}The projection of $L\left( m,n\right) $
onto the submodule of $\mathcal{S}_{2,\infty }^{\left( m,n\right) }\left(
M^{3}\right) $ generated by $\mathrm{Cat}_{F}\left( m,n\right) $ is given by 
\begin{equation*}
L_{F}\left( m,n\right) =\sum_{s\in \mathcal{K}_{F}\left( m,n\right)
}A^{p(s)-n(s)}\mathbf{K}\left( s\right) ,
\end{equation*}
\end{proposition}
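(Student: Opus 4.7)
My plan is to prove the formula in two stages: first, verify that each $s\in\mathcal{K}_F(m,n)$ contributes cleanly (with $|D_s|=0$ and $\mathbf{K}(s)\in\mathrm{Cat}_F(m,n)$, so the $(-A^2-A^{-2})^{|D_s|}$ factor disappears); second, show that every remaining Kauffman state contributes zero after projection onto the submodule spanned by $\mathrm{Cat}_F(m,n)$.

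For the first stage, I would analyze the smoothing of a single monotonic row $s_i=(1^{b_i},(-1)^{n-b_i})$ and observe that it produces only an ``upper cup'' joining $N_{(i,b_i)}$ and $N_{(i,b_i+1)}$ (when $0<b_i<n$) together with strands running from one side of the row to another; no ``lower cup'' $S_{(i,j)}$--$S_{(i,j+1)}$ is ever created. Because upper cups open upward, stacking monotonic rows never produces closed loops or joins two floor points. A short induction on $m$ then gives $|D_s|=0$ and $\mathbf{K}(s)\in\mathrm{Cat}_F(m,n)$ for every $s\in\mathcal{K}_F(m,n)$, so these states contribute precisely $A^{p(s)-n(s)}\mathbf{K}(s)$ to $L_F(m,n)$.

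For the second stage, I would decompose $L(m,n)=R_1R_2\cdots R_m$ as a vertical product of row elements $R_i=\sum_{s_i\in\{\pm1\}^n}A^{p(s_i)-n(s_i)}T_{s_i}$, and establish a local Kauffman cancellation identity: suppose the tangle immediately above row $i$ contributes an arc joining $N_{(i,j)}$ and $N_{(i,j+1)}$ (a ``cap from above''). Then, holding all other crossings in row $i$ fixed, the partial sum over the four choices of $(s_{i,j},s_{i,j+1})$ collapses to a single term in which a new lower cup $S_{(i,j)}$--$S_{(i,j+1)}$ appears. Indeed, the assignments $(+,+)$, $(+,-)$, $(-,-)$ all produce the same ``straight-through'' local tangle---with $(+,-)$ additionally picking up a factor $-A^2-A^{-2}$ from the closed loop formed when its own upper cup meets the cap from above---so these three contributions sum to $A^2+(-A^2-A^{-2})+A^{-2}=0$, while the remaining $(-,+)$ assignment survives with weight~$1$ and carries the propagated cap. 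Iterating down the lattice, any lower cup born in a non-monotonic row $i_0$ propagates through rows $i_0+1,\ldots,m$ and, upon reaching row $m$, becomes an arc between floor points $x'_j$ and $x'_{j+1}$; the projection onto $\mathrm{Cat}_F(m,n)$ eliminates all such terms.

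The main obstacle is organizing this cancellation unambiguously when several non-monotonic rows or several lower cups coexist in a single state. My plan is to induct on the lexicographic pair (topmost non-monotonic row, leftmost inversion column) of $s$: each application of the local identity pushes this distinguished cup strictly downward by one row, so the induction terminates after at most $m$ steps, at which point the distinguished cup has reached the floor and the group of states sums to a linear combination of floor-return Catalan states, all of which vanish under the projection. Combining the two stages yields the stated formula for $L_F(m,n)$.
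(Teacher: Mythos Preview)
Your proposal is correct and rests on the same geometric fact as the paper's proof: a non-monotonic row produces a lower cap, and that cap can be pushed to the floor via Reidemeister~II (which is precisely your four-term cancellation identity $A^2+(-A^2-A^{-2})+A^{-2}=0$ with the surviving $(-,+)$ term). The paper's argument is terse---it simply processes rows top to bottom, observes (via Figures~2.3--2.4) that a $(-1,+1)$ pattern isotopes to a lower cap, and discards such partial diagrams on the fly---whereas you make the propagation explicit by summing over the $4^{m-i_0}$ states below the first inversion and collapsing them to a single floor-return term. Both routes use the same local lemma; yours is a more careful bookkeeping of the same cancellation.

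One small point of phrasing: your ``induction on the lexicographic pair (topmost non-monotonic row, leftmost inversion column) of $s$'' is not really an induction on a property of a single state. What you are actually doing is \emph{partitioning} the non-$\mathcal{K}_F$ states into groups---each group fixes rows $1,\dots,i_0$ entirely and rows $i_0{+}1,\dots,m$ at columns $\neq j,j{+}1$, and ranges over the $4^{m-i_0}$ choices at columns $j,j{+}1$ below the first inversion---and then applying the local identity $m-i_0$ times per group. The relevant induction variable is the row index of the propagating cap, not the lexicographic pair. You should verify (it is easy) that every state in such a group has the \emph{same} first inversion $(i_0,j)$, so the groups really do partition the complement of $\mathcal{K}_F(m,n)$; once that is said, the argument goes through cleanly.
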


\noindent

\begin{proof}
We compute $L_{F}\left( m,n\right) $ starting from the top row of crossings
to the bottom (row by row) using the Kauffman bracket skein relation and
omitting states with returns on the floor (observe that we will never get
trivial components). When doing so, we should not take into the account
Kauffman states with change of markers from $-1$ to $1$ in a row (see 
\textrm{Figure}~\ref{fig:DeformationCat1}) as they result in lower caps
after the regular isotopy of diagrams (see \textrm{Figure}~\ref%
{fig:DeformationCat2}). 
\begin{figure}[h]
\centering
\begin{minipage}{.4\textwidth}
\centering
\includegraphics[width=\linewidth]{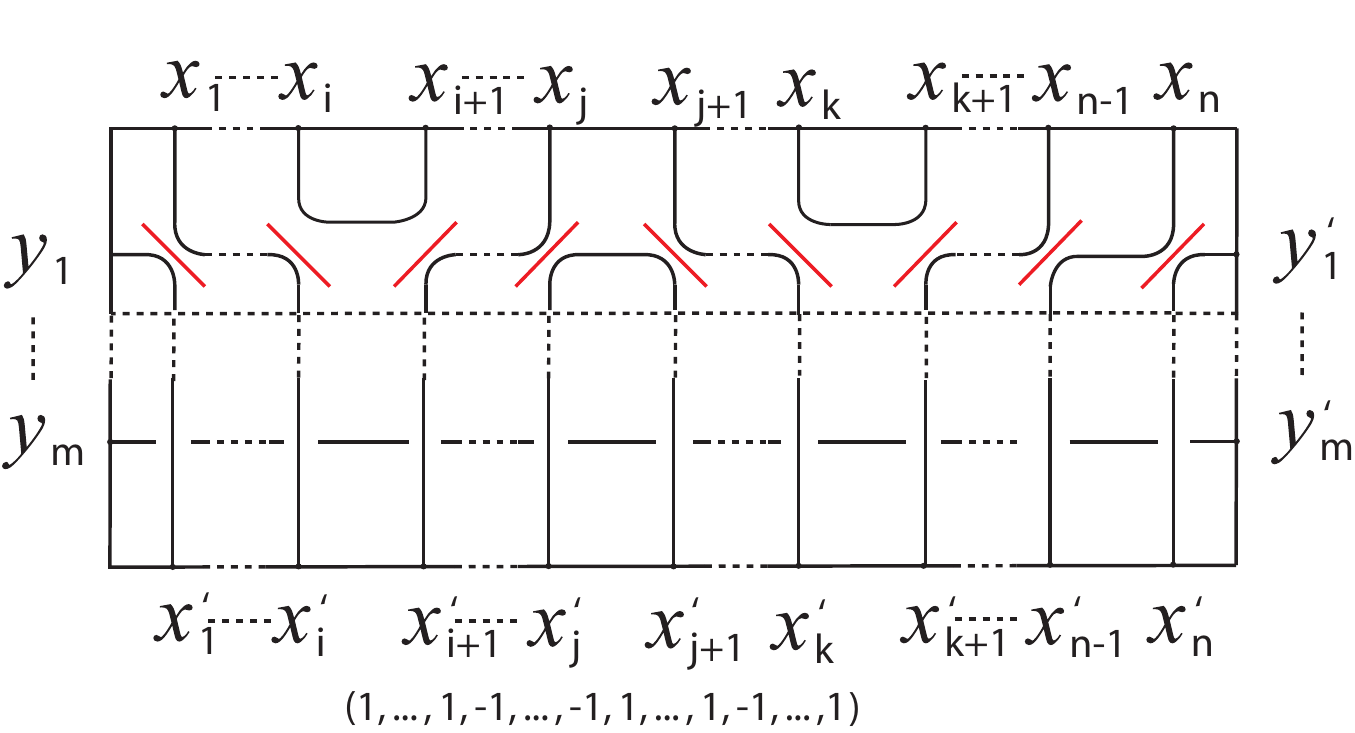}
\caption{Figure 2.3}
\label{fig:DeformationCat1}
\end{minipage}\hspace{10mm} 
\begin{minipage}{.4\textwidth}
\centering
\includegraphics[width=\linewidth]{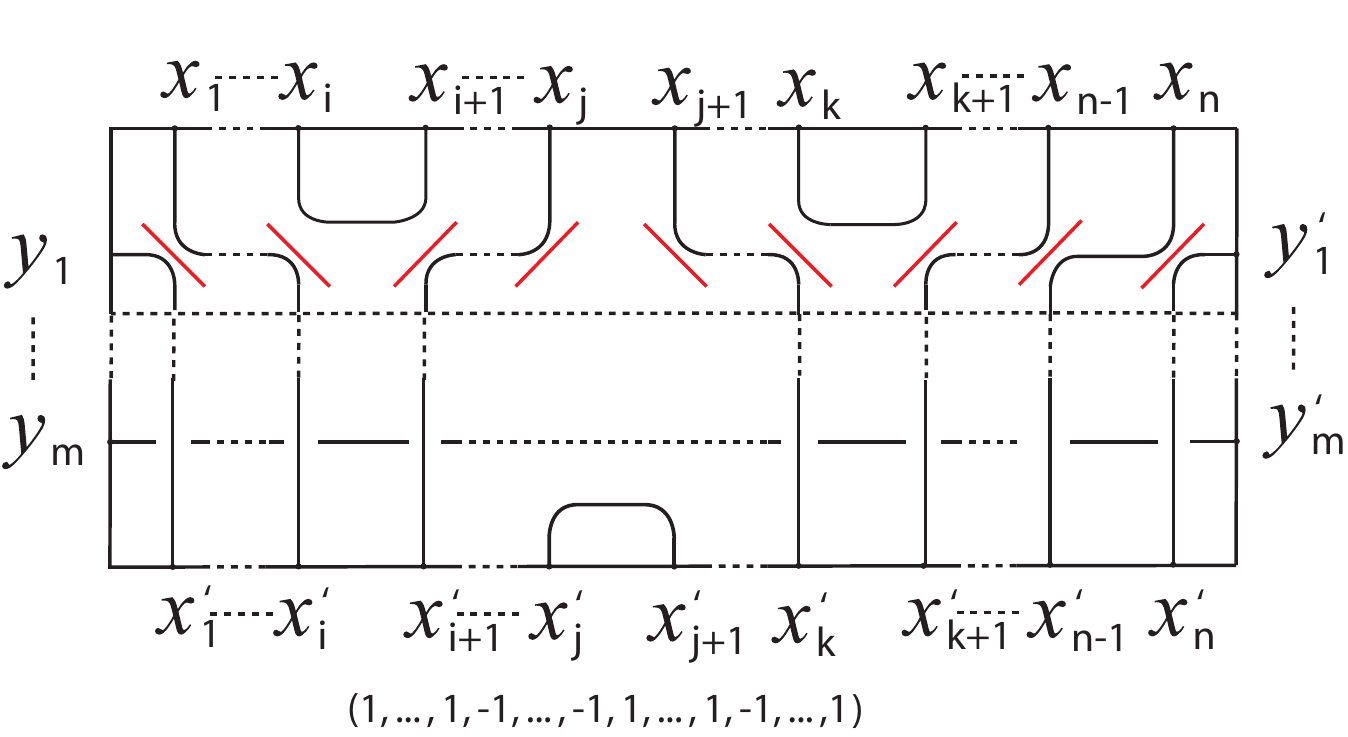}
\caption{Figure 2.4}
\label{fig:DeformationCat2}
\end{minipage}
\end{figure}

Therefore, all Catalan states with no returns on the floor can be obtained
using only Kauffman states $\mathcal{K}_{F}\left( m,n\right) $. In
particular, we have%
\begin{equation*}
L_{F}\left( m,n\right) =\sum_{s\in \mathcal{K}_{F}\left( m,n\right)
}A^{p(s)-n(s)}\mathbf{K}\left( s\right).
\end{equation*}%
Hence, the coefficient $C\left( A\right) $ of the Catalan state $C\in \mathrm{%
Cat}_{F}\left( m,n\right)$ can be computed using the formula:%
\begin{equation*}
C\left( A\right) =\sum_{s\in A\left( C\right) }A^{p(s)-n(s)},
\end{equation*}%
where $A\left( C\right) $ consists of all Kauffman states representing $C$
(i.e. $A\left( C\right) =\left\{ s\in \mathcal{K}_{F}\left( m,n\right) \text{
}|\text{ }\mathbf{K}\left( s\right) =C\right\} $).
\end{proof}

\medskip

Let $\mathcal{P}\left( m,n\right) =\left\{ 0,1,2,...,n\right\} ^{m}$ be the
set of all sequences $\mathbf{b}=\left( b_{1},b_{2},...,b_{m}\right) $, where $0\leq
b_{j}\leq n$, and $j\in \left\{ 1,2,...,m\right\} $. The sets $\mathcal{K}%
_{F}\left( m,n\right) $ and $\mathcal{P}\left( m,n\right) $ are in bijection,
so denote by $\mathbf{b}\left( s\right) \in \mathcal{P}\left( m,n\right) $
the sequence corresponding to $s\in \mathcal{K}_{F}\left( m,n\right) $;
analogously, by $s\left( \mathbf{b}\right) \in \mathcal{K}_{F}\left(
m,n\right) $, we denote the Kauffman state corresponding to $\mathbf{b}\in 
\mathcal{P}\left( m,n\right) $. Given $\mathbf{b}\in \mathcal{P}\left(
m,n\right)$, we denote by $C\left( \mathbf{b}\right) $ the Catalan state
obtained from $\mathbf{b}$ (i.e. $C\left( \mathbf{b}\right) =\mathbf{K}%
\left( s\left( \mathbf{b}\right) \right) $).

\begin{proposition}
\label{RealizableStates}For every $C\in \mathrm{Cat}_{F}\left( m,n\right) $,
there is a sequence $\mathbf{b}\in \mathcal{P}\left( m,n\right)$ such that 
$C\left( \mathbf{b}\right) =C$.
\end{proposition}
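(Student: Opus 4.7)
Since $C \in \mathrm{Cat}_{F}(m,n)$ is realizable, I need only exhibit a single Kauffman state $s \in \mathcal{K}_{F}(m,n)$ with $\mathbf{K}(s) = C$; the corresponding sequence $\mathbf{b}(s) \in \mathcal{P}(m,n)$ will then satisfy $C(\mathbf{b}(s)) = C$ by the very definition of $C(\mathbf{b})$. In other words, the statement reduces to establishing the inclusion $\mathbf{K}^{-1}(\mathrm{Cat}_{F}(m,n)) \subseteq \mathcal{K}_{F}(m,n)$, whose reverse inclusion is already built into the proof of Proposition \ref{CoefficientOfCatalanState}.

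I would prove the inclusion by contradiction. Pick any $s^{\ast} \in \mathbf{K}^{-1}(C)$, which exists by realizability, and suppose that some row $i$ of $s^{\ast}$ contains an adjacent pair of markers $s^{\ast}_{i,j} = -1$ and $s^{\ast}_{i,j+1} = +1$. Applying the regular-isotopy move of Figures \ref{fig:DeformationCat1}--\ref{fig:DeformationCat2} (exactly as it is invoked in the proof of Proposition \ref{CoefficientOfCatalanState}) to the smoothed diagram $D_{s^{\ast}}$ rewrites this pair as a $(+1,-1)$ pair sitting above a downward-pointing cap placed in the strip immediately below row $i$. I would then trace this cap through the rows $i+1,\ldots,m$. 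The parenthetic observation in the proof of Proposition \ref{CoefficientOfCatalanState} that the row-by-row procedure never produces trivial closed components means that the two legs of the cap cannot reconnect to each other inside the lattice, so they must both descend all the way to $X^{\prime}$. This yields an arc of $\mathbf{K}(s^{\ast})$ with both endpoints in $X^{\prime}$, that is, a return on the floor of $\mathrm{R}_{m,n}^{2}$, contradicting $C \in \mathrm{Cat}_{F}(m,n)$. Hence $s^{\ast} \in \mathcal{K}_{F}(m,n)$, and $\mathbf{b} := \mathbf{b}(s^{\ast})$ is the required sequence.

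The main obstacle is the tracing step: rigorously ruling out the possibility that the downward cap created in row $i$ is absorbed into a closed loop by the smoothings of rows $i+1, \ldots, m$. I would formalise this by a descending induction on $m - i$, showing that any smoothing pattern in a subsequent row that attempts to reconnect the two legs of the cap itself introduces a further $-1 \to +1$ transition and hence a further downward cap, so closure inside the lattice is impossible and the legs are forced onto $X^{\prime}$. An alternative, entirely constructive plan that avoids this geometric subtlety is to induct on $m$: for $m \geq 1$, read off $b_{1}$ directly from the way the arcs of $C$ emanating from $X$ organise themselves along the top of $L(m,n)$, smooth the top row with the pattern $(+1^{b_{1}},-1^{n-b_{1}})$, verify that the resulting restriction of $C$ to the sub-lattice $L(m-1,n)$ lies in $\mathrm{Cat}_{F}(m-1,n)$, and appeal to the inductive hypothesis to obtain $(b_{2},\ldots,b_{m})$.
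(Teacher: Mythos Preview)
Your main approach rests on the inclusion $\mathbf{K}^{-1}(C)\subseteq\mathcal{K}_{F}(m,n)$ for $C\in\mathrm{Cat}_{F}(m,n)$, and this inclusion is false. In $L(2,2)$ take the Kauffman state with row~$1$ equal to $(-1,+1)$ and row~$2$ equal to $(+1,+1)$. Smoothing gives the four arcs $x_{1}\!-\!y_{1}$, $x_{2}\!-\!y_{1}'$, $x_{1}'\!-\!y_{2}$, $x_{2}'\!-\!y_{2}'$ and no closed component, so the resulting Catalan state lies in $\mathrm{Cat}_{F}(2,2)$ even though the state is not in $\mathcal{K}_{F}(2,2)$. (This same Catalan state is also realised by $\mathbf{b}=(0,1)\in\mathcal{P}(2,2)$, consistent with the proposition.) The flaw in the tracing argument is the dichotomy ``either the legs close into a loop or they both reach $X'$'': you have overlooked the third possibility that one or both legs exit through the sides $Y$ or $Y'$, which is exactly what happens in the example above. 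The parenthetical remark you cite from Proposition~\ref{CoefficientOfCatalanState} about never producing trivial components refers to the \emph{restricted} row-by-row computation over $\mathcal{K}_{F}$-rows, not to arbitrary Kauffman states, so it cannot be invoked for your chosen $s^{\ast}$.

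Your alternative plan at the end --- induct on $m$ by reading off a first coordinate $b_{1}$ from the top of $C$ and recursing on the truncated state in $\mathrm{Cat}_{F}(m-1,n)$ --- is exactly the paper's argument. The one ingredient you leave implicit and should make explicit is \emph{why} such a $b_{1}$ exists: the paper observes that any realizable Catalan state must possess an innermost upper cup $e_{b_{1}}$, since otherwise a horizontal line just below the ceiling would meet $C$ in $n+2$ points, violating the realizability criterion of \cite{DLP}. With that lemma in hand the induction goes through.
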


\noindent

\begin{proof}
Every realizable Catalan state $C$ (not necessarily in $\mathrm{Cat}_{F}(m,n)$) has
at least one innermost upper cup $e_{b_{1}}$ (see \textrm{Figure}~\ref{fig:CupsProofCatalan}).
Otherwise, as shown in \textrm{Figure}~\ref{fig:Forbidden}, $C$ has no upper cups 
below the dotted line which also cuts $C$ in $n+2$ points. Hence, it follows that $C$ is non-realizable\footnote{%
As we have shown in \cite{DLP} (see Lemma $2.1$ and Theorem $2.5$), a Catalan
state $C\in \mathrm{Cat}\left( m,n\right) $ is realizable if and only if
every vertical line cuts $C$ at most $m$ times and every horizontal line
cuts $C$ at most $n$ times.}. The statement of Proposition \ref{RealizableStates} follows by induction on $m$.\newline
\indent If $m=1$ there are $n+1$ Catalan states with no returns on the floor and a single innermost upper cup (see \textrm{Figure}~\ref{fig:TopCatalan}), so Proposition \ref%
{RealizableStates} holds with $b=(b_{1})$.
\begin{figure}[h]
\centering
\begin{minipage}{1\textwidth}
\centering
\includegraphics[width=\linewidth]{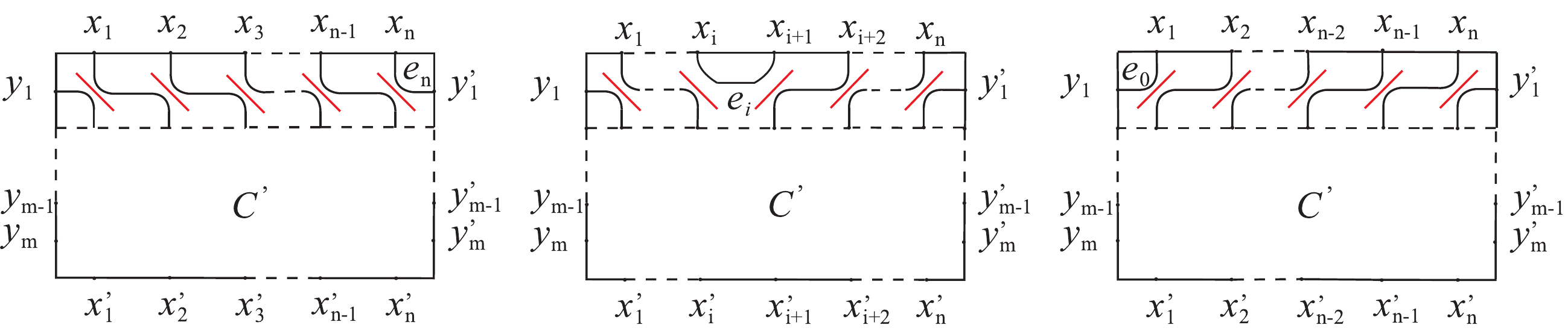}
\caption{Figure 2.5}
\label{fig:CupsProofCatalan}
\end{minipage}
\end{figure}

\begin{figure}[h]
\centering
\begin{minipage}{.3\textwidth}
\centering
\includegraphics[width=\linewidth]{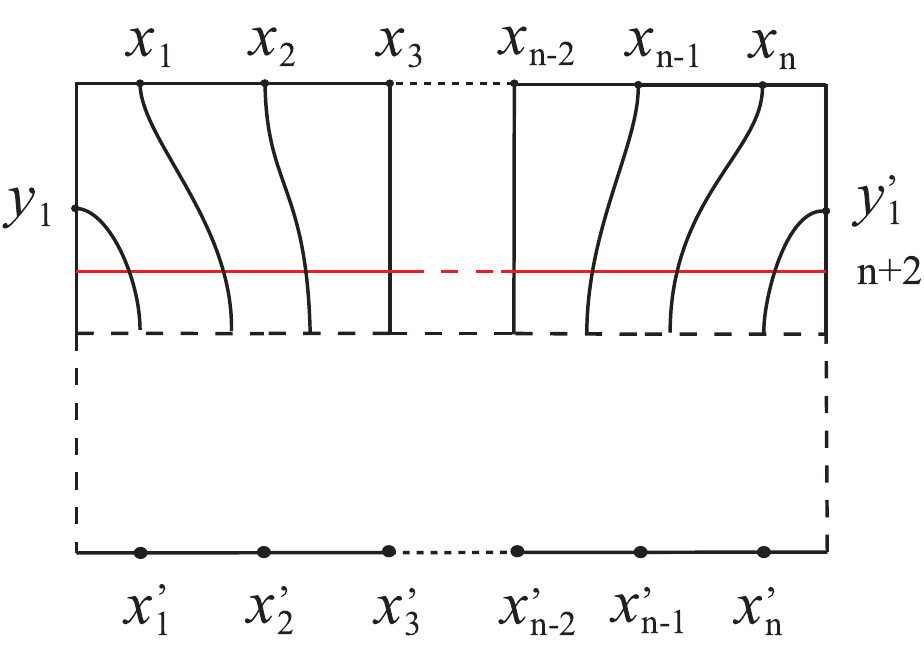}
\caption{Figure 2.6}
\label{fig:Forbidden}
\end{minipage}
\end{figure}

\begin{figure}[h]
\centering
\begin{minipage}{1\textwidth}
\centering
\includegraphics[width=\linewidth]{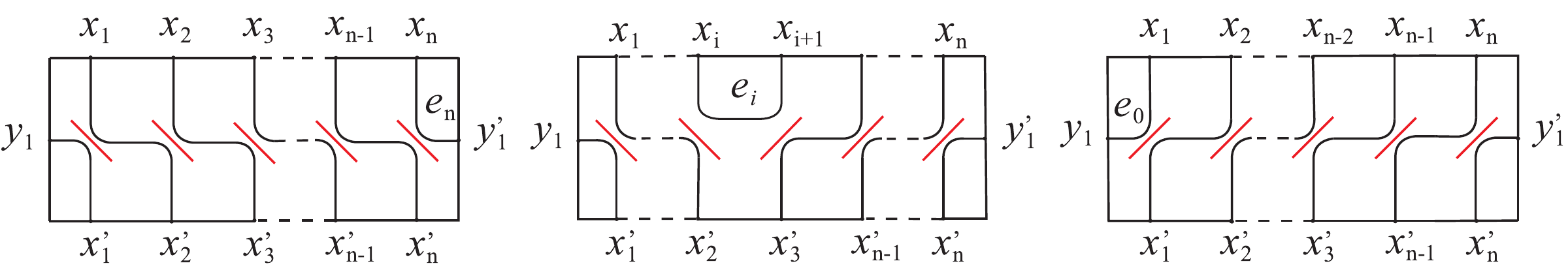}
\caption{Figure 2.7}
\label{fig:TopCatalan}
\end{minipage}
\end{figure}

Let $m\geq 2$ and $C\in \mathrm{Cat}_{F}(m,n)$. Assume that the statement 
holds for all numbers smaller than $m$. As we noted before, $C$ has an innermost upper cup 
$e_{b_{1}}$ and we can deform the diagram of $C$ to that
in Figure~\ref{fig:CupsProofCatalan}. Consider the Catalan state $%
C^{\prime }$ shown in the bottom of \textrm{Figure}~\ref{fig:CupsProofCatalan}.
Since $C^{\prime }$ is in $\mathrm{Cat}_{F}(m-1,n)$, then by the
inductive hypothesis, there is a sequence $(b_{2},...,b_{m})$ that realizes it.
We conclude that $\mathbf{b}=(b_{1},b_{2},...,b_{m})$ realizes $C$ which finishes our proof.
\end{proof}

\medskip

It is clear (from our proof of Proposition \ref{RealizableStates}) that for $%
m\geq 2$, there might be several sequences\footnote{%
The Catalan state $C$ in $\mathrm{Cat}_{F}\left( m,n\right) $ has exactely
one representative $\mathbf{b}$ iff $\mathbf{b}=\left(
b_{1},b_{2},...,b_{m}\right) $ with $b_{1}=b_{2}=...=b_{k}=n-1$, $%
b_{k+1}=...=b_{m}=n$ for some $1\leq k\leq m$, or $b_{1}=b_{2}=...=b_{k}=1$, 
$b_{k+1}=...=b_{m}=0$, for for some $1\leq k\leq m$.} $\mathbf{b}\in \mathcal{P%
}\left( m,n\right) $ with $C\left( \mathbf{b}\right) =C$. Therefore,
for $C\in \mathrm{Cat}_{F}\left( m,n\right) $, we let 
\begin{equation*}
\mathfrak{b}\left( C\right) =\left\{ \mathbf{b}\in \mathcal{P}\left(
m,n\right) \text{ }|\text{ }C\left( \mathbf{b}\right) =C\right\}.
\end{equation*}%

We note that, for $C\in \mathrm{Cat}_{F}\left( m,n\right) $, there is a
bijection between the set $A\left( C\right) $ of all Kauffman states
representing $C$ and the set $\mathfrak{b}\left( C\right) $.

\begin{definition}
Let $\mathbf{b}=\left( b_{1},b_{2},...,b_{m}\right) \in \mathcal{P}\left(
m,n\right)$ be a sequence with $b_{i}<b_{i+1}<n$ for some $i$ \emph{(}$1\le i \le m$\emph{)}. Let $P_{i}$ an operation defined by%
\begin{equation*}
P_{i}\left( \mathbf{b}\right)=(b_{1},...,b_{i-1},b_{i+1}+1,b_{i}+1,b_{i+2},...,b_{m}),
\end{equation*}

\noindent and $P_{i}^{-1}$ be its inverse.
Sequences $\mathbf{b}$, $\mathbf{b}^{\prime }\in \mathcal{P}\left(
m,n\right) $ are called $P$\emph{-equivalent} if $\mathbf{b}^{\prime }$ and $%
\mathbf{b}$ differ by a finite number of $P_{i}^{\pm 1}$ operations.
\end{definition}

\begin{proposition}
If $\mathbf{b}\in \mathfrak{b}\left( C\right) $ and $\mathbf{b}^{\prime }\in 
\mathcal{P}\left( m,n\right) $ is $P$-equivalent to $\mathbf{b}$ then $%
\mathbf{b}^{\prime }\in \mathfrak{b}\left( C\right) .$
\end{proposition}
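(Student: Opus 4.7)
The plan is to reduce the proposition to a single-step verification. Since $\mathbf{b}$ and $\mathbf{b}^{\prime }$ differ by finitely many $P_j^{\pm 1}$ moves, a short induction on the length of this sequence, together with the obvious symmetry between $P_j$ and $P_j^{-1}$, reduces everything to showing that a single application of $P_i$ preserves the resulting Catalan state, i.e.\ that $C(P_i(\mathbf{b}))=C(\mathbf{b})$ whenever $P_i(\mathbf{b})$ is defined.

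Since $P_i$ alters only the two entries $b_i<b_{i+1}<n$, the Kauffman states $s(\mathbf{b})$ and $s(P_i(\mathbf{b}))$ agree outside rows $i$ and $i+1$, and within those rows they agree in every column $j$ with $j\le b_i$ or $j\ge b_{i+1}+2$. The only block where the two smoothed diagrams can differ is the $2\times k$ window on columns $b_i+1,\ldots ,b_{i+1}+1$, where $k=b_{i+1}-b_i+1\ge 2$. Before $P_i$, this window carries $k$ negative markers in row $i$ and the pattern $\underbrace{+\cdots +}_{k-1}\,-$ in row $i+1$; after $P_i$, it carries $k$ positive markers in row $i$ and the pattern $+\,\underbrace{-\cdots -}_{k-1}$ in row $i+1$. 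Denoting the boundary points of the window by $T_1,\ldots ,T_k$ on top, $B_1,\ldots ,B_k$ on bottom, $L_1,L_2$ on the left and $R_1,R_2$ on the right, I plan to trace the arcs produced by each smoothing and verify that both yield the pairing
\begin{equation*}
L_1\leftrightarrow T_1,\quad L_2\leftrightarrow B_1,\quad T_j\leftrightarrow B_j\ (2\le j\le k-1),\quad T_k\leftrightarrow R_1,\quad B_k\leftrightarrow R_2,
\end{equation*}
with no closed components in either case, consistent with $s(\mathbf{b}),s(P_i(\mathbf{b}))\in \mathcal{K}_F(m,n)$ and Proposition~\ref{CoefficientOfCatalanState}. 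Since the smoothings coincide outside the window, this gives $\mathbf{K}(s(\mathbf{b}))=\mathbf{K}(s(P_i(\mathbf{b})))=C$, hence $P_i(\mathbf{b})\in\mathfrak{b}(C)$.

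The main obstacle I anticipate is bookkeeping rather than conceptual: one must fix the convention identifying which local smoothing corresponds to the $+1$ and $-1$ markers (as in Figure~\ref{fig:Markers}) and then follow arcs carefully across the shared interface points $S_{i,j}=N_{i+1,j}$ where an arc from the row~$i$ crossing glues to an arc from the row~$i+1$ crossing. A labelled figure of the two $2\times k$ windows side-by-side, with the arcs drawn in, seems cleaner than a purely symbolic trace; the case $k=2$ already exhibits the essential ``commutation'' picture, and for general $k$ the interior columns simply extend a zig-zag that realises the $T_j\leftrightarrow B_j$ pairings for $2\le j\le k-1$.
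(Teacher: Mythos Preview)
Your proposal is correct and follows essentially the same approach as the paper: reduce to a single $P_i$ move and verify that the two smoothings produce the same boundary connections in the local $2\times k$ window. The paper's proof is a one-liner pointing to Figure~\ref{fig:PMove} with the remark that $P_i$ geometrically amounts to reordering which of two nested innermost cups is realized first; your explicit arc-trace is precisely the content of that figure written out, and your claimed pairing $L_1\!\leftrightarrow\!T_1$, $L_2\!\leftrightarrow\!B_1$, $T_j\!\leftrightarrow\!B_j$, $T_k\!\leftrightarrow\!R_1$, $B_k\!\leftrightarrow\!R_2$ checks out under the marker convention of Figure~\ref{fig:Markers}.
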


\noindent

\begin{proof}
It suffices to show that if $\mathbf{b}\in \mathfrak{b}\left( C\right) $
and $\mathbf{b}^{\prime }=P_{i}\left( \mathbf{b}\right) $, then $\mathbf{b}%
^{\prime }\in \mathfrak{b}\left( C\right) $. This is evident from \textrm{%
Figure}~\ref{fig:PMove} 
\begin{figure}[h]
\centering
\begin{minipage}{1\textwidth}
\centering
\includegraphics[width=\linewidth]{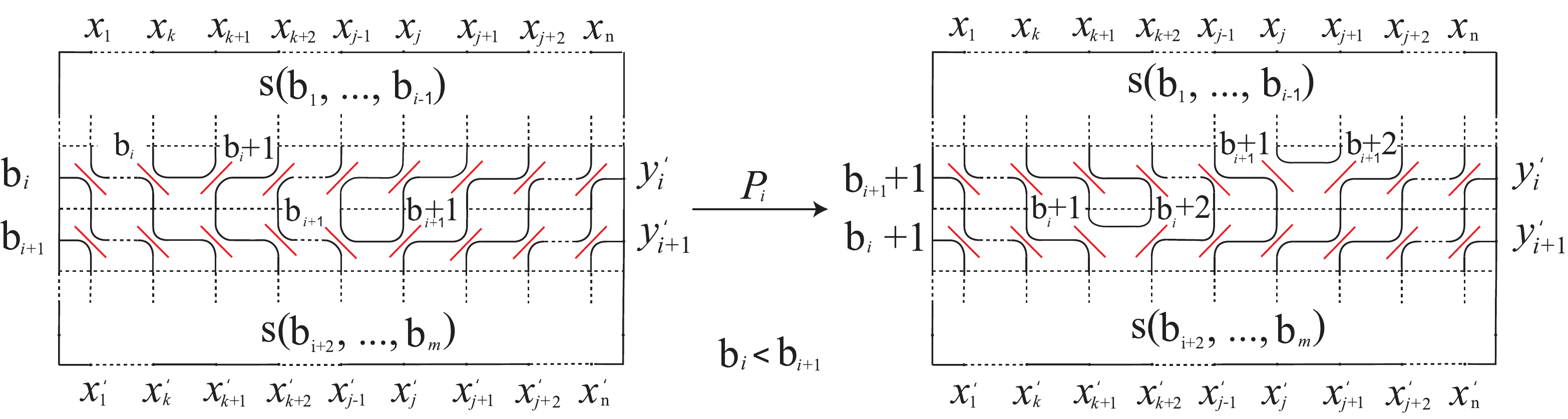}
\caption{Figure 2.8}
\label{fig:PMove}
\end{minipage}
\end{figure}
since operations $P_{i}$ correspond (geometrically) to a change of 
order in which arcs of $C$ (with no ends on the floor) are realized.
\end{proof}

\medskip

Using $P_{i}$ operations, we define a poset structure on $\mathcal{P}%
\left( m,n\right) $ as follows: $\mathbf{b}^{\prime }$ \emph{covers} $%
\mathbf{b}$ (i.e. $\mathbf{b}\ll \mathbf{b}^{\prime }$) iff there is $1\leq i<m-1$, such that $\mathbf{b}^{\prime
}=P_{i}\left( \mathbf{b}\right) $. Let $\mathbf{\preceq }$ be the transitive closure of $\ll $.
Clearly, $\left( \mathcal{P}\left( m,n\right) ,\text{ }\mathbf{\preceq }%
\right) $ is a poset. Hence $\left( \mathfrak{b}\left( C\right) ,%
\text{ }\mathbf{\preceq }\right) $ is also a poset.

\begin{proposition}
\label{ConnectedGraph}Let $C\in \mathrm{Cat}_{F}\left( m,n\right) $ and $%
\vec{G}\left( C\right) =\left( V,E\right) $ be the directed graph with
vertices $V=\mathfrak{b}\left( C\right) $ and directed edges
\begin{equation*}
E=\left\{ \left( \mathbf{b},\text{ }\mathbf{b}^{\prime }\right) \in V\times V%
\text{ }|\text{ }\mathbf{b}\ll \mathbf{b}^{\prime }\right\} .
\end{equation*}%
Let $G\left( C\right) $ be the graph obtained from $\vec{G}\left(
C\right) $ by ignoring the edge orientations.
Then $G\left( C\right) $ is a connected graph.
\end{proposition}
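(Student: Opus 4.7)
The plan is to prove connectedness by induction on $m$, exploiting the fact that each $\mathbf{b}\in \mathfrak{b}(C)$ encodes an order in which the $m$ non-floor arcs of $C$ are realized row by row, and that $P_i^{\pm 1}$-moves correspond exactly to the geometric swaps of adjacent rows shown in Figure~\ref{fig:PMove}. For the base case $m=1$, every $C\in \mathrm{Cat}_F(1,n)$ has a single innermost upper cup whose position uniquely determines $b_1$, so $\mathfrak{b}(C)$ is a singleton and $G(C)$ is trivially connected.

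For the inductive step I fix $\mathbf{b},\mathbf{b}'\in \mathfrak{b}(C)$ and argue in two cases. If $b_1=b_1'$, then by the peeling argument in the proof of Proposition~\ref{RealizableStates}, both tails $(b_2,\dots,b_m)$ and $(b_2',\dots,b_m')$ lie in $\mathfrak{b}(C')$ for the same $C'\in \mathrm{Cat}_F(m-1,n)$ obtained from $C$ by removing the innermost upper cup $e_{b_1}$; the inductive hypothesis yields a $P$-chain in $G(C')$ which lifts to a $P$-chain in $G(C)$ via the reindexing $P_j\mapsto P_{j+1}$, with applicability conditions preserved since the first entry never changes. If $b_1\neq b_1'$, I aim to produce a $P$-chain from $\mathbf{b}$ to some $\tilde{\mathbf{b}}\in \mathfrak{b}(C)$ with $\tilde b_1 = b_1'$, so that the previous case then completes the argument. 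Since $b_1'$ indexes an innermost upper cup of $C$, its realization in $\mathbf{b}$ occurs at some row $j\geq 2$; I bubble this entry up to position $1$ by repeatedly applying $P_{j-1}^{\pm 1}$, which should be applicable at each step because the two arcs being swapped are innermost and independent in the sub-state realized by rows $j-1$ and $j$.

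The main obstacle will be verifying these applicability conditions carefully, i.e.\ showing that the inequalities $b_{j-1}<b_j<n$ (for $P_{j-1}$) or the mirror conditions for $P_{j-1}^{-1}$ actually hold at each bubble-up step. This reduces to a case analysis of Figure~\ref{fig:PMove}: violating $b_j<n$ in the increasing case would force the arc in row $j$ to cross every position of row $j-1$, contradicting the assumption that it is an innermost upper cup of the rows $j-1,\dots,m$; while $b_{j-1}=b_j$ would force the two rows to produce a Catalan state different from $C$. Alternatively, the whole argument can be recast abstractly by identifying $\mathfrak{b}(C)$ with the set of linear extensions of a natural partial order on the non-floor arcs of $C$, with $P_i^{\pm 1}$-moves corresponding to adjacent transpositions in that poset, after which connectedness reduces to the standard fact that the linear extensions of a finite poset are connected under adjacent swaps.
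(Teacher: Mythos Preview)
Your inductive skeleton matches the paper's proof exactly: induction on $m$, with the case $b_1=b_1'$ handled by stripping the first row and applying the hypothesis to the tail in $\mathfrak{b}(C')$. The difference is in the case $b_1\neq b_1'$. You propose to bubble the realization of $e_{b_1'}$ directly from row $j$ up to row $1$ via successive $P_{j-1}^{\pm1}$; the paper instead applies the inductive hypothesis \emph{a second time} to the tail $(b_2',\dots,b_m')\in\mathfrak{b}(C')$ to move the realization of $e_{b_1}$ to the front of that tail, and then a \emph{single} $P_1$ aligns the first coordinates. The paper's trick sidesteps the row-by-row applicability check entirely, which is exactly the part of your argument that is shaky.

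Concretely, your justification ``$b_{j-1}=b_j$ would force the two rows to produce a Catalan state different from $C$'' is not correct as stated: equal consecutive entries do occur in $\mathfrak{b}(C)$ (they encode nested cups). The right argument is that the arc $c$ removed at row $j-1$ and the target arc $e_{b_1'}$ are \emph{both} innermost cups of the state reached after rows $1,\dots,j-2$ (the first because row $j-1$ removes it, the second because it is already a leaf of $T(C)$), hence occupy positions at least two apart, forcing $c_{j-1}\neq c_j$. Likewise, the boundary cases $c_j\in\{0,n\}$ are excluded not by your ``crosses every position'' remark but simply because $e_{b_1'}$ with $0<b_1'<n$ never touches a side point at any level (and the cases $b_1'\in\{0,n\}$ force $b_1=b_1'$ outright, since $e_0,e_n$ use $y_1,y_1'$ and must be realized in row $1$). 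With these corrections your bubble-up goes through; your closing remark about linear extensions of the tree poset (with the delay function) is a clean way to package the same idea, and is a reasonable alternative to both arguments.
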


\noindent

\begin{proof}
Connectedness of the graph $G\left( C\right) $ follows by induction on $m$.
For $m=1$, the statement is obvious since $\mathfrak{b}\left( C\right) $ has
only one element. Assume that for all $C^{\prime }\in \mathrm{Cat}%
_{F}\left( m-1,n\right) $, the graph $G\left( C^{\prime }\right) $ is
connected and let $\mathbf{b}=\left( b_{1},b_{2},...,b_{m}\right) $, $%
\mathbf{b}^{\prime }=\left( b_{1}^{\prime }, b_{2}^{\prime
}, ..., b_{m}^{\prime }\right) \in V$. If $b_{1}=b_{1}^{\prime }$ then
sequences $\mathbf{a}=\left( b_{2},...,b_{m}\right) $, $\mathbf{a}^{\prime
}=\left( b_{2}^{\prime }, ..., b_{m}^{\prime }\right) \in \mathfrak{b}\left(
C^{\prime }\right) $, where $C^{\prime }\in \mathrm{Cat}_{F}\left(
m-1,n\right) $ is obtained from $C$ by removing its top corresponding to $%
b_{1}$ ($= b_{1}^{\prime }$). By inductive assumption, the graph $G\left(
C^{\prime }\right) $ is connected, so $\mathbf{a}$ and $\mathbf{a}^{\prime }$
are $P$-equivalent and consequently, $\mathbf{b}$ and $\mathbf{b}^{\prime }$
are also $P$-equivalent. Therefore, there is a path in $G\left( C\right) $
joining vertices $\mathbf{b}$ and $\mathbf{b}^{\prime }$. WLOG, we can
assume that $0\leq b_{1}^{\prime }<b_{1}\leq n$ (in fact $b_{1}^{\prime
}\leq b_{1}-2$ because $e_{b_{1}^{\prime }}$ and $e_{b_{1}}$ are the
innermost cups). In the sequence $\mathbf{a}^{\prime }=\left( b_{2}^{\prime
},...,b_{m}^{\prime }\right) $ that represents $C^{\prime }\in \mathrm{Cat}%
_{F}\left( m-1,n\right) $ there is $b_{k}^{\prime }$ corresponding to the
innermost upper cup $e_{b_{1}}$ of $C$. By inductive assumption one changes $%
\mathbf{a}^{\prime }$ to $\mathbf{a}^{\prime \prime }=\left( b_{2}^{\prime
\prime },\text{ }b_{3}^{\prime \prime },\text{ }...,\text{ }b_{m}^{\prime
\prime }\right) \in \mathfrak{b}\left( C^{\prime }\right) $ using $%
P_{i}^{\pm 1}$ operations, where $b_{2}^{\prime \prime }=b_{1}-1$ represents
the innermost upper cup $e_{b_{1}}$ in $C$. Now, $P_{1}$ operation changes the sequence $%
\left( b_{1}^{\prime },  \text{ }b_{1}-1, \text{ } b_{2}^{\prime \prime }, \text{ }b_{3}^{\prime
\prime },\text{ }..., \text{ }b_{m}^{\prime \prime }\right) $ to $\left(
b_{1},\text{ }b_{1}^{\prime }+1,\text{ }b_{3}^{\prime \prime },\text{ }...,%
\text{ }b_{m}^{\prime \prime }\right) $. Using the same argument as in the
first case (i.e. $b_{1}=b_{1}^{\prime }$), we see that $\mathbf{b}$ and $\mathbf{b%
}^{\prime }$ are in the same connected component of $G\left( C\right) $. It follows
that the graph $G(C)$ is connected.
\end{proof}

\medskip

We observe that, if $\mathbf{b}\ll \mathbf{b}^{\prime }$ then $\left\vert 
\mathbf{b}^{\prime }\right\vert =\left\vert \mathbf{b}\right\vert +2$, hence
the directed graph $\vec{G}\left( C\right) $ has no directed cycles, i.e. $%
\vec{G}\left( C\right) $ is a \emph{Hasse diagram} of the poset $\left( 
\mathfrak{b}\left( C\right) ,\preceq \right) $.
\medskip

Let $\left\vert \mathbf{b}\right\vert =\sum_{i=1}^{m}b_{i}$ denote the
weight of sequence $\mathbf{b}\in \mathcal{P}\left( m,n\right) $. Define $%
\mathbf{b}_{\mathbf{m}}$, $\mathbf{b}_{\mathbf{M}}\in \mathfrak{b}\left(
C\right) $ to be a minimal and a maximal sequence representing $C$ in the
lexicographic order on $\mathfrak{b}\left( C\right) $\footnote{%
Recall, $\mathbf{b}=\left( b_{1},  \text{ }b_{2}, \text{ }..., \text{ }b_{m}\right) \mathbf{\prec }%
_{lex}\mathbf{b}^{\prime }=\left( b_{1}^{\prime }, \text{ } b_{2}^{\prime
}, \text{ }..., \text{ } b_{m}^{\prime }\right) $ iff there is $k$, such that $%
b_{i}=b_{i}^{\prime }$ for $i<k$ and $b_{k}<b_{k}^{\prime }$.} respectively.

\begin{proposition}
Let $C\in \mathrm{Cat}_{F}\left( m,n\right) $ and $\mathbf{b}_{\mathbf{m}}$, 
$\mathbf{b}_{\mathbf{M}}\in \mathfrak{b}\left( C\right) $ be defined as the
above. Then $\mathbf{b}_{\mathbf{m}}$, $\mathbf{b}_{\mathbf{M}}$ are unique
elements having the smallest and the largest weight, respectively.
\end{proposition}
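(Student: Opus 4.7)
I would prove the statement for $\mathbf{b}_m$; the result for $\mathbf{b}_M$ follows by a symmetric argument. The cornerstone is the double monotonicity of $P_i$: it increases the weight by exactly $2$, since $(b_{i+1}+1)+(b_i+1)-(b_i+b_{i+1})=2$, and it strictly raises the lexicographic order, since positions $1,\ldots,i-1$ are fixed while $b_i$ is replaced by $b_{i+1}+1>b_i$ at position $i$. Dually, $P_i^{-1}$ strictly lowers both. In particular, $\mathbf{b}_m$ admits no $P_i^{-1}$ operation: otherwise, by the preceding proposition that $P$-equivalence preserves $\mathfrak{b}(C)$, the element $P_i^{-1}(\mathbf{b}_m)$ would lie in $\mathfrak{b}(C)$ and be strictly lex-smaller than $\mathbf{b}_m$, contradicting lex-minimality.

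Next I would induct on $m$. The base case $m=1$ is immediate because $\mathfrak{b}(C)$ is a singleton. For $m\geq 2$, write $\mathbf{b}_m=(a_1,\ldots,a_m)$. By Proposition \ref{RealizableStates}, the possible first entries of sequences in $\mathfrak{b}(C)$ are exactly the indices of innermost upper cups of $C$, so lex-minimality forces $a_1$ to be the smallest such index. Moreover, among sequences in $\mathfrak{b}(C)$ whose first entry is $a_1$, the tails exhaust $\mathfrak{b}(C'_{a_1})$, where $C'_{a_1}\in\mathrm{Cat}_F(m-1,n)$ is obtained from $C$ by removing its first row together with the cup $e_{a_1}$; and $(a_2,\ldots,a_m)$ is the lex-minimum of $\mathfrak{b}(C'_{a_1})$, so by the inductive hypothesis it is the unique minimum-weight element therein.

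For any $\mathbf{b}=(b_1,\ldots,b_m)\in\mathfrak{b}(C)$ with $\mathbf{b}\neq\mathbf{b}_m$, I would show $|\mathbf{b}|>|\mathbf{b}_m|$. When $b_1=a_1$, the distinct tails $(b_2,\ldots,b_m)$ and $(a_2,\ldots,a_m)$ in $\mathfrak{b}(C'_{a_1})$ yield the inequality directly from the inductive hypothesis. The main obstacle is the case $b_1>a_1$: writing $W_c:=\min\{|\mathbf{c}|:\mathbf{c}\in\mathfrak{b}(C'_c)\}$, one must prove the strict inequality $a_1+W_{a_1}<b_1+W_{b_1}$ for every innermost cup index $b_1>a_1$ of $C$. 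I would handle this by an explicit transformation: starting from a minimum-weight representative $(b_1,b_2',\ldots,b_m')\in\mathfrak{b}(C)$, locate the row $r\geq 2$ at which the cup $e_{a_1}$ is realized, and apply a sequence of $P^{-1}$-operations (possibly preceded by commuting $P$-moves on positions $>r$) to transport $e_{a_1}$ up to position $1$. A careful bookkeeping of the weight changes along this transformation yields the strict inequality. This weight accounting is the delicate technical step, since local confluence of the $P^{-1}$ rewriting system can fail in general, so the decrement must be tracked through a global rearrangement rather than a purely local one.

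The statement for $\mathbf{b}_M$ is proved by the dual argument, where $P_i$ plays the role of $P_i^{-1}$ and the largest innermost upper cup index plays the role of the smallest.
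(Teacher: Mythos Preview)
Your overall architecture matches the paper's: induction on $m$, a case split on the first entry, and the easy case $b_1=a_1$ handled by the inductive hypothesis on tails. The observation that each $P_i$ simultaneously increases weight by $2$ and the lexicographic order is correct and is exactly the mechanism underlying both arguments.

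The gap is in the case $b_1>a_1$. You state precisely what must be shown, namely $a_1+W_{a_1}<b_1+W_{b_1}$, and you propose to transport the cup $e_{a_1}$ from row $r$ to row $1$ by $P^{-1}$-moves ``possibly preceded by commuting $P$-moves on positions $>r$''; you then assert that ``a careful bookkeeping of the weight changes \ldots\ yields the strict inequality'' and call this the ``delicate technical step''. But this bookkeeping is never done, and as you yourself note, pure $P^{-1}$-reduction need not be available at every stage (the condition $c_{i+1}\geq 1$ can fail), so once auxiliary $P$-moves enter the picture the net weight change is no longer obviously negative. As written, the argument for the hard case is a plan rather than a proof.

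The paper closes this gap differently. Working with $\mathbf{b}_M$ and a competitor $\mathbf{b}'$ with $b_1'<b_1$, it locates the row $k$ at which $\mathbf{b}'$ realizes the cup $e_{b_1}$ and splits on whether every intermediate entry $b_2',\dots,b_{k-1}'$ equals $n$. If so, a single explicit ``swap of cups'' produces a sequence in $\mathfrak{b}(C)$ beginning with $b_1$ whose weight exceeds $|\mathbf{b}'|$ by exactly $2$; the already-proved case $b_1'=b_1$ then gives $|\mathbf{b}'|+2\leq|\mathbf{b}_M|$. If not, let $j$ be the first intermediate index with $b_j'\neq n$: the paper shows that the suffix $(b_j',\dots,b_m')$ is not lex-maximal in $\mathfrak{b}$ of the corresponding smaller Catalan state, so the inductive hypothesis applied to that suffix yields a strictly heavier sequence. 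This case analysis is what replaces your unperformed bookkeeping; if you want to salvage your transport idea, you would need an argument of comparable concreteness showing either that pure $P^{-1}$-moves suffice from a well-chosen representative, or that the auxiliary $P$-moves are always outnumbered by subsequent $P^{-1}$-moves.
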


\noindent

\begin{proof}
Let $\mathbf{b}_{\mathbf{M}}=\left( b_{1}, \text{ }b_{2}, \text{ }..., \text{ }b_{m}\right) $ and $%
\mathbf{b}^{\prime }=\left( b_{1}^{\prime }, \text{ }b_{2}^{\prime }, \text{ }%
..., \text{ }b_{m}^{\prime }\right) $ be another sequence representing $C$
(if it exists). We show that $\left\vert \mathbf{b}^{\prime }\right\vert
<\left\vert \mathbf{b}_{\mathbf{M}}\right\vert $ by induction on $m$. For $%
m=1$, the statement holds since $\mathfrak{b}\left( C\right) $ has exactly one
element. Assume that the statement is valid for all numbers smaller than $m$. 
If $b_{1}^{\prime }=b_{1}$ we can compare shorter sequences $\left(
b_{2}, \text{ }..., \text{ }b_{m}\right) $ and $\left( \text{ }b_{2}^{\prime },\text{ }...,%
\text{ }b_{m}^{\prime }\right) $. Using induction assumption we conclude
that the first sequence has the larger weight, i.e. $\left\vert \mathbf{b%
}^{\prime }\right\vert <\left\vert \mathbf{b}_{\mathbf{M}}\right\vert $.

Suppose $b_{1}^{\prime }$ is smaller than $b_{1}$. There exists 
$b_{k}^{\prime }<n$ ($k>1$) which represents the
innermost upper cup $e_{b_{1}}$ in $C$. We consider the following cases:\smallskip%
\newline
\noindent $\mathbf{(i)}$ $b_{2}^{\prime }=...=b_{k-1}^{\prime }=n$. Changing order of cups 
$e_{b_{1}^{\prime }}$ and $e_{b_{1}}$ in $(b_{1}^{\prime },\text{ } n,\text{ }...,\text{ }n,\text{ }b_{k}^{\prime },\text{ }...,\text{ } b_{m}^{\prime })$, where $b_{k}^{\prime }=b_{1}+k-3$, results
in a sequence $(b_{1},  \text{ }n,  \text{ }...,  \text{ }n,  \text{ }b_{1}^{\prime }+k-1,  \text{ }..., \text{ } b_{m}^{\prime })$,
with the weight larger by $2$.\smallskip\newline
\noindent $\mathbf{(ii)}$ There is $b_{i}^{\prime }$ different than $n$, for
some $1<i<k$. Let $j$ be the smallest such index (so $b_{i}^{\prime }=n$ for 
$1<i<j$). Notice that the arc giving the innermost upper cup $e_{b_{1}}$ in $C$ on
the level $j$ has index $b_{1}+j-3$.\newline
There are two possibilities:\newline
\indent If $b_{j}^{\prime }<b_{1}+j-3$, then the sequence $(b_{j}^{\prime
},..,b_{k}^{\prime },...,b_{m}^{\prime })$ represents the Catalan state with
two innermost cups $e_{b_{j}^{\prime }}$ and $e_{b_{1}+j-3}$, where $%
e_{b_{j}^{\prime }}$ is to the right of $e_{b_{1}+j-3}$. By inductive
assumption, the sequence does not have the maximal weight, as it
is not maximal in the lexicographical order (i.e. $b_{j}^{\prime }<b_{1}+j-3$).\newline
\indent If $b_{j}^{\prime }>b_{1}+j-3$ then $b_{j}^{\prime }$ represents the
innermost upper cup $e_{b_{j}^{\prime }-j+3}$ in $C$ which is to the right of $e_{b_{1}}$. 
This, however, contradicts the maximality of $\mathbf{b}_{M}$ in the lexicographic order.\newline
Therefore, $\mathbf{b}_{\mathbf{M}}$ is a unique element with the maximal
weight in $\mathfrak{b}\left( C\right) $. A proof for $\mathbf{b}_{\mathbf{m}}$ is similar.
\end{proof}

\begin{example}
\emph{Figure~\ref{fig:PosetB} }$($\emph{diagram on the right}$)$\emph{\
shows the Hasse diagram }$\vec{G}\left( C\right) $\emph{\ associated to the Catalan state }%
$C\in Cat_{F}\left( 4,4\right) $\emph{\ }$($\emph{see the diagram on the left%
}$)$\emph{, and the maximal sequence }$b_{\mathbf{M}}=\left( 3,4,4,3\right) $%
\emph{\ that realizes }$C$\emph{\ }$($\emph{see the middle}$)$\emph{.}
\end{example}

\begin{figure}[h]
\centering
\begin{minipage}{.6\textwidth}
\centering
\includegraphics[width=\linewidth]{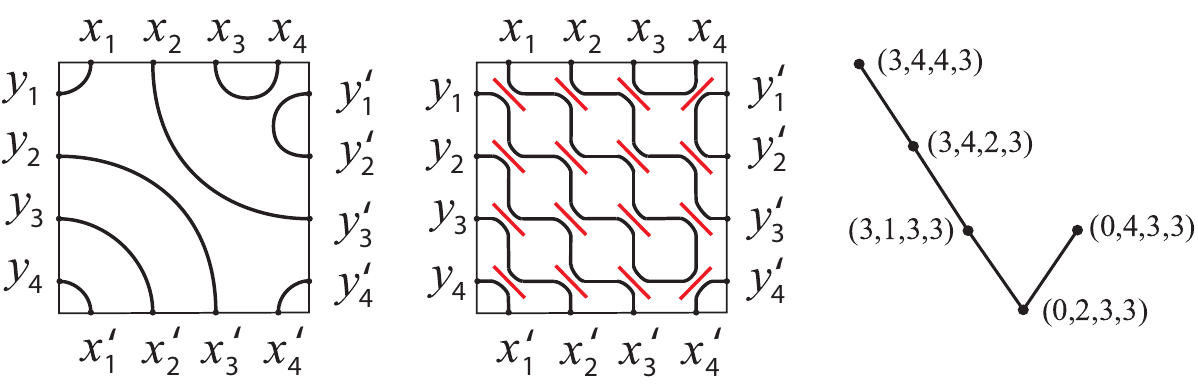}
\caption{Figure 2.9}
\label{fig:PosetB}
\end{minipage}
\end{figure}

From the definition of $\mathfrak{b}\left( C\right) $ it directly follows that

\begin{equation*}
C\left( A\right)=\sum\limits_{\mathbf{b}\in 
\mathfrak{b}\left( C\right) }\left(\prod\limits_{i=1}^{m}A^{2b_{i}-n}\right) =\sum\limits_{\mathbf{b}\in \mathfrak{b}\left( C\right)
}A^{2\left\vert \mathbf{b}\right\vert -mn}.
\end{equation*}%
For the Catalan state in \textrm{Figure}~\ref{fig:PosetB}, in particular, we
have:%
\begin{equation*}
C\left( A\right) =1+2A^{4}+A^{8}+A^{12}.
\end{equation*}

\section{Coefficient $C\left( A\right) $ and Plucking Polynomial\label{Sec_3}%
}

In this section we explore the relationship between coefficients of Catalan
states and the plucking polynomial of associated rooted trees. We would like to stress the fact that the definition of the plucking
polynomial was strongly motivated by \cite{DLP}.\footnote{%
In fact, the plucking polynomial for rooted trees was discovered just after
the paper \cite{DLP} was finished.}

\subsection{Rooted Tree with Delay Function Associated to $C\in \mathrm{Cat}_{F}\left( m,n\right) $}

Let $C$ be a Catalan state in $\mathrm{Cat}\left( m,n\right) $. We define,
in a standard way, a planar tree $T^{\prime }\left( C\right) $ by taking the
dual graph to $C$. That is, the set of $\left( m+n\right) $ arcs of $C$
splits the rectangle $\mathrm{R}_{m,n}^{2}$ into $m+n+1$ bounded regions $%
R_{i}$. For each region $R_{i}$ we have a vertex and, two vertices are adjacent in 
$T^{\prime }\left( C\right) $ iff their corresponding regions share
boundary in common (see \textrm{Figure}~\ref{fig:CatalanTreeGeneral}).

\begin{figure}[h]
\centering
\begin{minipage}{.3\textwidth}
\centering
\includegraphics[width=\linewidth]{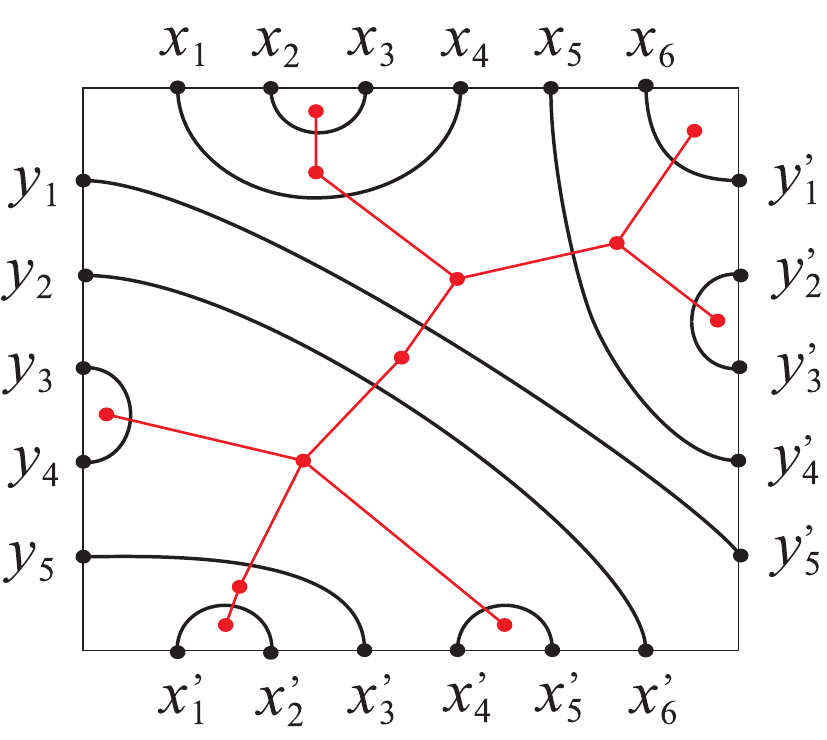}
\caption{Figure 3.1}
\label{fig:CatalanTreeGeneral}
\end{minipage}
\end{figure}

For a Catalan state $C$ with no returns on the floor ($C\in \mathrm{Cat}_{F}\left( m,n\right) $),
we use a modified version of the tree $T^{\prime }\left( C\right)$ described above. 
Let $A=\left\{ a_{1},a_{2},...,a_{n}\right\} $ be the set of arcs of $C$ with an end on the floor of $\mathrm{R}_{m,n}^{2}$, and $\left\{ c_{1},c_{2},...,c_{m}\right\} $ be the set of arcs of $C-A$. Denote by $T\left( C\right) =\left( V,E\right) $
the dual graph to $C-A$ and observe that $T\left( C\right) $ is an embedded planar tree with $m+1$ vertices $v \in V$ (corresponding to regions of $C-A$) and $m$ edges $e\in E$ (corresponding to arcs\footnote{Each edge $e\in E$ is dual to a unique arc $c\in C-A$.} of $C-A$). There is an obvious choice for the
root $v_{0}\in V$ of $T\left( C\right) $, i.e. $v_{0}$ is the vertex assigned to the regions
containing (as a part of its boundary) the floor of $\mathrm{R}_{m,n}^{2}$
(see right of \textrm{Figure}~\ref{fig:TrimedTree}).

\begin{figure}[h]
\centering
\begin{minipage}{.6\textwidth}
\centering
\includegraphics[width=\linewidth]{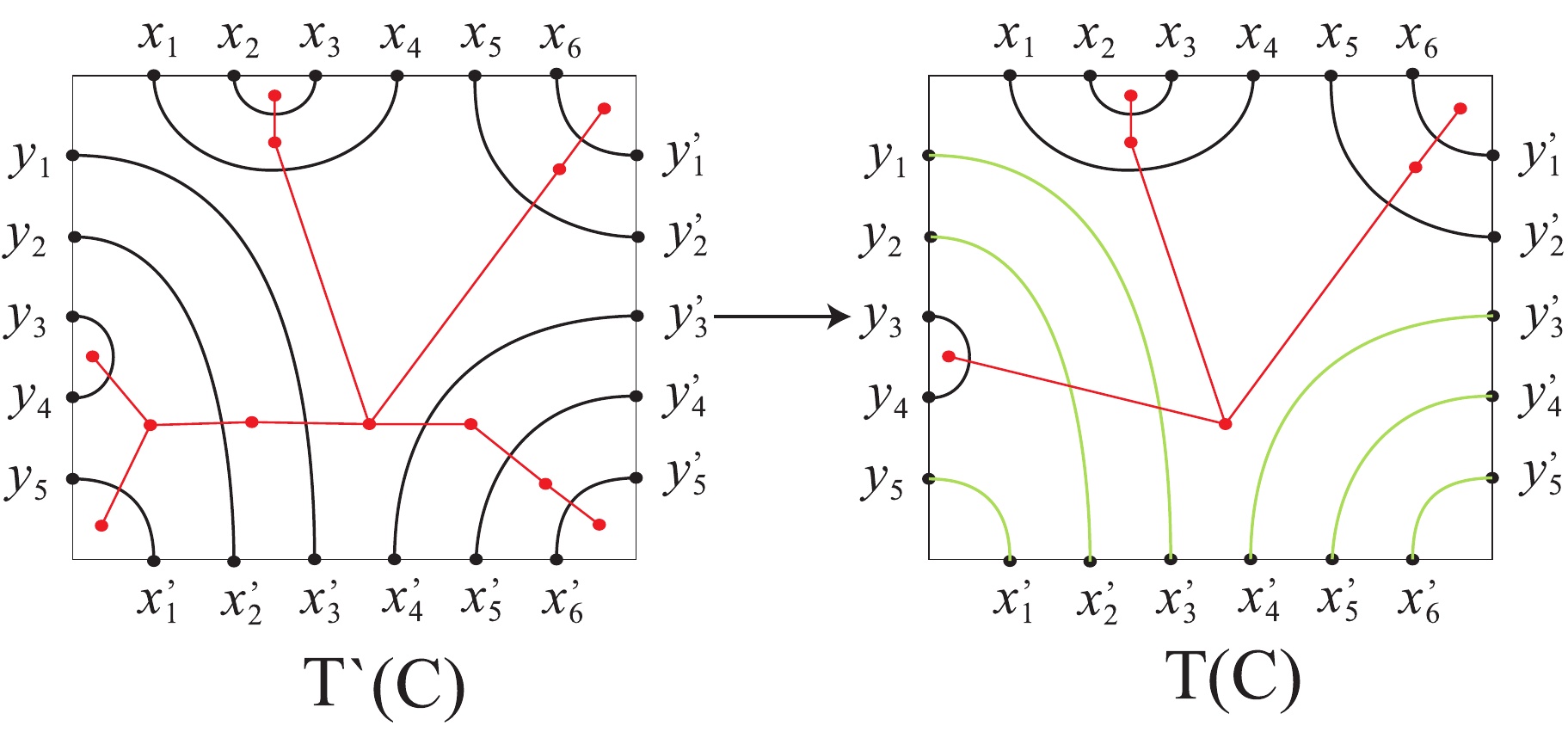}
\caption{Figure 3.2}
\label{fig:TrimedTree}
\end{minipage}
\end{figure}

For a vertex $u\in V\left( T\right)$, let $d\left( u\right) $ be the
number of vertices adjacent to $u$ (degree of $u$). A vertex $v\neq v_{0}$
of degree one ($d\left( v\right) =1$) is called a leaf. Denote by $%
L\left( T,v_{0}\right) $ the set of all leaves of $\left( T,v_{0}\right) $.
Let $h:C-A\rightarrow \left\{ 0,%
\text{ }1,\text{ }...,\text{ }m\right\} $ be defined by setting $h\left( c\right) $ to
be $0$, if $c$ has both ends in $X$; and $h\left( c\right) $ to be
the maximal index $i$ of the end point $y_{i}$ or $y_{i}^{\prime }$ of the
arc $c\in C-A$, otherwise. Define the delay function $f:L\left( T\left( C\right)
,v_{0}\right) \rightarrow \left\{ 1,\text{ }2,\text{ }...,\text{ }m\right\} $
by putting $f\left( v\right) =\max \left\{ 1,\text{ }h\left( c\right)
\right\}$, where $c$ corresponds to the edge $e$ incident to $v$. Let $\mathcal{T}\left(
C\right)=( T( C) ,v_{0},f) $ be the \emph{rooted tree with a delay function }$f$ associated
to $C\in \mathrm{Cat}_{F}\left( m,n\right) $ (see \textrm{%
Figure}~\ref{fig:TreeWithDelay}). We note that there might be several different Catalan states $C$ with the same $\mathcal{T}\left(
C\right)$ (compare Figure~\ref{fig:TreeWithDelay} and Figure~\ref{fig:ExampleImportant}).

\begin{figure}[h]
\centering
\begin{minipage}{.6\textwidth}
\centering
\includegraphics[width=\linewidth]{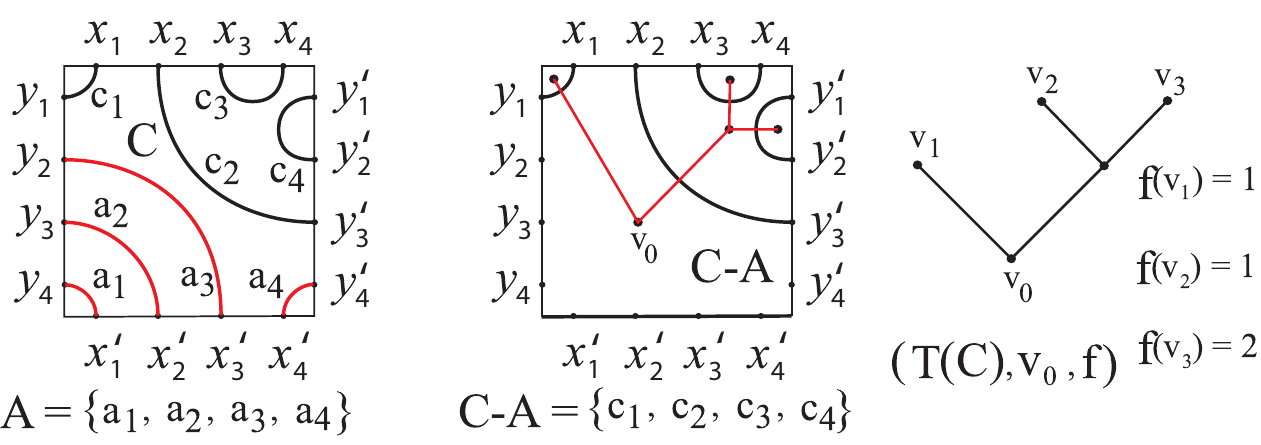}
\caption{Figure 3.3}
\label{fig:TreeWithDelay}
\end{minipage}\hspace{10mm} 
\begin{minipage}{.22\textwidth}
\centering
\includegraphics[width=\linewidth]{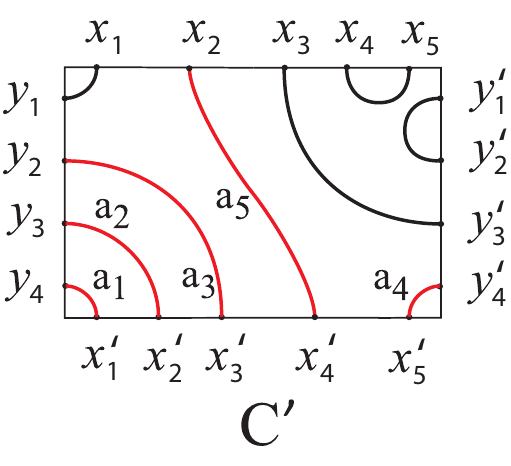}
\caption{Figure 3.4}
\label{fig:ExampleImportant}
\end{minipage}
\end{figure}

\subsection{Plucking Polynomial of Rooted Trees with Delay Function}

We recall, after \cite{JHP-1,JHP-2}, the definition of the plucking polynomial of
a plane rooted tree with a delay function from leaves to positive
integers. Let $\left( T,v_{0}\right) $ be a plane rooted tree (we assume
that our trees are growing upwards, see \textrm{Figure}~\ref{fig:RootedTree}). 
For $v\in L(T,v_0)$ consider the unique path from $v$ to $v_0$, and let 
$r(T,v)$ be the number of vertices of $T$ to the right of the path.

\begin{figure}[h]
\centering
\begin{minipage}{.35\textwidth}
\centering
\includegraphics[width=\linewidth]{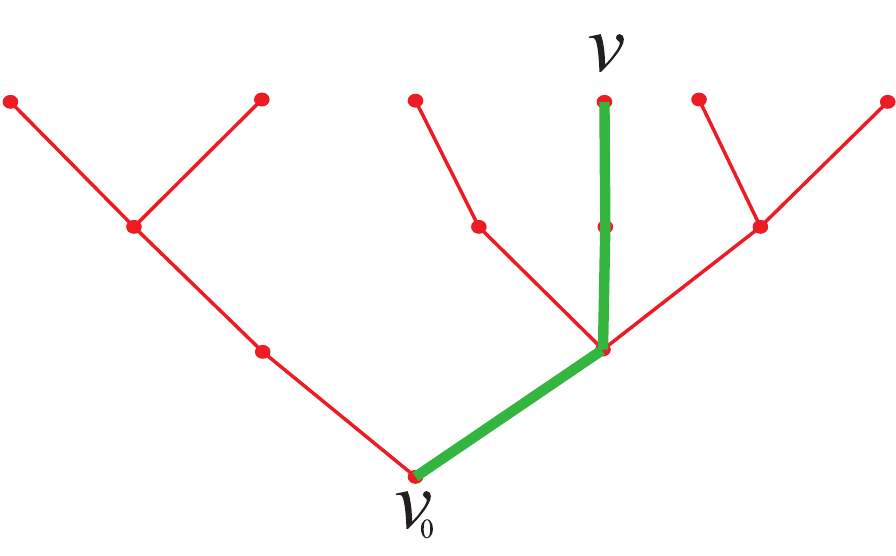}
\caption{Figure 3.5}
\label{fig:RootedTree}
\end{minipage}
\end{figure}

\begin{definition}
\label{DefinitionOfQpolynomial}Let $( T,v_{0},f) $ be a plane rooted tree $T$ 
with the root $v_{0}$ and delay function $f$. Denote by $L_{1}\left( T\right) $ the set of all leaves $v$ of $T$ with $f\left( v\right) =1$. 
The plucking polynomial $Q\left( T,f\right) $ of $( T,v_{0},f) $ is a polynomial in variable $q$ defined as follows\emph{:} If $T$ has no edges, we put $Q\left( T,f\right) =1$\emph{;} otherwise
\begin{equation*}
Q\left( T,f\right) =\sum\limits_{v\in L_{1}\left( T\right) }q^{r\left(
T,v\right) }Q\left( T-v,f_{v}\right) ,
\end{equation*}%
where $f_{v}\left( u\right) =\max \left\{ 1,f\left( u\right)
-1\right\} $ if $u$ is a leaf of $T$, and $f_{v}\left( u\right) =1$ if $u$ is
a new leaf of $T-v$.
\end{definition}

\begin{remark}
\emph{Clearly, }$Q\left( T,f\right) =0$\emph{\ if }$L_{1}\left( T\right)
=\emptyset $\emph{\ and, we note that this is never the case when }$T$\emph{\
is a tree with the delay function associated to a Catalan state.}
\end{remark}

\begin{example}
\label{SampleCalc}\emph{In Figure~\ref{fig:ComputationsQPoly}, computations
of }$Q\left( \mathcal{T}\left( C\right) \right) $\emph{\ are shown for }$%
\mathcal{T}\left( C\right) $ \emph{associated to the Catalan state }$C$ \emph{ in Figure~\ref{fig:TreeWithDelay}.}
\end{example}

\begin{figure}[h]
\centering
\begin{minipage}{.8\textwidth}
\centering
\includegraphics[width=\linewidth]{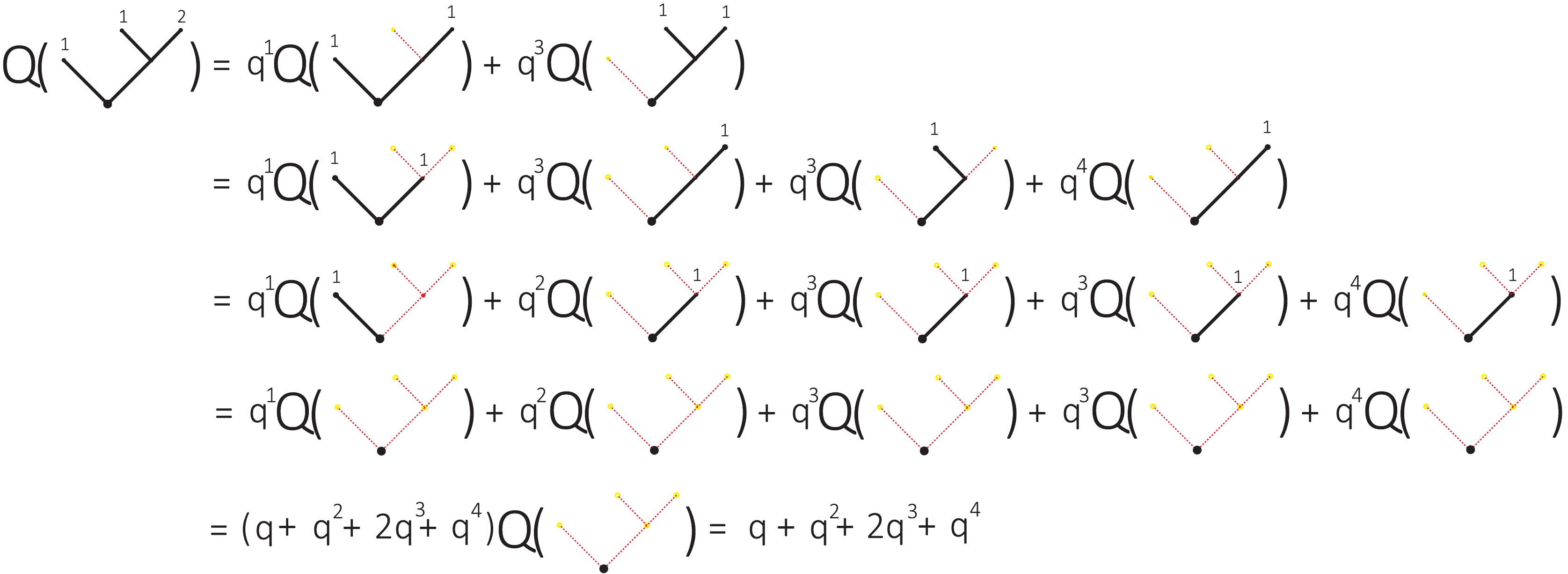}
\caption{Figure 3.6}
\label{fig:ComputationsQPoly}
\end{minipage}
\end{figure}

Our next result establishes a relationship between the coefficient $C\left(
A\right) $ ($C\in \mathrm{Cat}_{F}\left( m,n\right) $) and the plucking
polynomial $Q\left( \mathcal{T}\left( C\right) \right) $ of a rooted tree $\mathcal{T}\left( C\right) $ associated to $C$.

\begin{theorem}
\label{MainTheorem}Let $C$ be a Catalan state with no returns on the floor, $\rm{mindeg_{q}}$$Q\left( \mathcal{T}\left( C\right)\right)$ be the minimum degree of $q$ in $Q\left( \mathcal{T}\left( C\right)\right) $ and $Q_{A}\left( \mathcal{T}\left( C\right)\right) $ its evaluation at $q=A^{-4}$. Then the coefficient $C\left( A\right) $ of $C$ in $L_{F}\left( m,n\right) $ is given by
\begin{equation*}
C\left( A\right) =A^{2\left\vert \mathbf{b}_{\mathbf{M}}\right\vert
-mn-4\cdot \rm{mindeg_{q}}\it{Q}\left( \mathcal{T}\left( C\right)\right)}Q_{A}\left( \mathcal{T}\left( C\right) \right).
\end{equation*}%
In particular, if $C$ has returns only on its ceiling or the left side, then $C\left( A\right) =A^{2\left\vert \mathbf{b}_{\mathbf{M}}\right\vert -mn}Q_{A}\left( \mathcal{T}\left( C\right) \right) $.
\end{theorem}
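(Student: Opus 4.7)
The plan is to prove the theorem by induction on $m$, pairing the recursion for the plucking polynomial $Q(T,f)$ (Definition \ref{DefinitionOfQpolynomial}) with the decomposition of $\mathfrak{b}(C)$ according to which innermost upper cup is realized at the top row. Using Proposition \ref{CoefficientOfCatalanState}, which gives $C(A)=\sum_{\mathbf{b}\in\mathfrak{b}(C)}A^{2|\mathbf{b}|-mn}$, and the substitution $q=A^{-4}$, the theorem reduces to the combinatorial identity
\begin{equation*}
Q(\mathcal{T}(C)) \;=\; \sum_{\mathbf{b}\in\mathfrak{b}(C)} q^{\tfrac{1}{2}(|\mathbf{b}_\mathbf{M}|-|\mathbf{b}|)+\mathrm{mindeg}_q Q(\mathcal{T}(C))}.
\end{equation*}
Note that this exponent is a nonnegative integer, since Proposition \ref{ConnectedGraph} together with the observation $|\mathbf{b}'|=|\mathbf{b}|+2$ whenever $\mathbf{b}\ll\mathbf{b}'$ shows that every element of $\mathfrak{b}(C)$ has the same weight parity as $\mathbf{b}_\mathbf{M}$.

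The correspondence driving the induction identifies leaves $v\in L_1(\mathcal{T}(C))$ with innermost upper cups of $C$ in the top row. Indeed, $f(v)=1$ forces $h(c)\leq 1$ for the arc $c$ dual to the edge at $v$, so $c$ is an innermost upper cup lying entirely in $X$ or touching $y_1$ or $y'_1$; write $b_1(v)$ for the position encoding this cup. Removing $c$ produces a state $C_v\in\mathrm{Cat}_F(m-1,n)$ whose associated rooted tree is exactly $(T(C)-v,f_v)$, the decrement $f\mapsto f_v$ recording that once the top row of the lattice has been used up, every remaining arc is one row closer to its boundary endpoint on the left or right side. Proposition \ref{RealizableStates} then yields the partition
\begin{equation*}
\mathfrak{b}(C) \;=\; \bigsqcup_{v\in L_1(\mathcal{T}(C))}\bigl\{(b_1(v),\mathbf{b}')\,:\,\mathbf{b}'\in\mathfrak{b}(C_v)\bigr\}.
\end{equation*}

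The key step is the weight-matching identity
\begin{equation*}
\tfrac{1}{2}\bigl(|\mathbf{b}_\mathbf{M}(C)|-b_1(v)-|\mathbf{b}_\mathbf{M}(C_v)|\bigr) \;=\; r(T(C),v)+\mathrm{mindeg}_q Q(\mathcal{T}(C_v))-\mathrm{mindeg}_q Q(\mathcal{T}(C)),
\end{equation*}
for every $v\in L_1(\mathcal{T}(C))$. Substituting it and the inductive hypothesis into the plucking recursion $Q(\mathcal{T}(C))=\sum_{v\in L_1} q^{r(T(C),v)}Q(\mathcal{T}(C_v))$ then yields the claimed combinatorial identity term by term. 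The hardest part will be proving this weight-matching identity, since $b_1(v)$ is a position on the boundary of $\mathrm{R}^2_{m,n}$ while $r(T(C),v)$ is a vertex count in a planar embedded tree, so the two are a priori of different combinatorial nature. I would anchor it at the rightmost leaf $v^{*}\in L_1$ (the one with $r(T(C),v^{*})=0$): by uniqueness of $\mathbf{b}_\mathbf{M}$ in lexicographic order, $b_1(v^{*})$ equals the first coordinate of $\mathbf{b}_\mathbf{M}(C)$, and a standard property of plucking polynomials (cf.\ \cite{JHP-1,JHP-2,CMPWY3}) gives $\mathrm{mindeg}_q Q(\mathcal{T}(C))=\mathrm{mindeg}_q Q(\mathcal{T}(C_{v^{*}}))$. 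For a general $v$, I would compare the realizations choosing $v$ versus $v^{*}$ at the top, tracking how both $|\mathbf{b}_\mathbf{M}(C_v)|$ and the $\mathrm{mindeg}_q$ terms shift; the planar structure of $T(C)$ converts this shift to exactly $r(T(C),v)$.

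For the second statement of the theorem, if $C$ has returns only on its ceiling or its left side, then no arc of $C$ has an endpoint in $Y'$, so at every stage of plucking the rightmost leaf of $L_1$ contributes $r=0$; iterating this yields $\mathrm{mindeg}_q Q(\mathcal{T}(C))=0$, and the general formula collapses to $C(A)=A^{2|\mathbf{b}_\mathbf{M}|-mn}Q_A(\mathcal{T}(C))$.
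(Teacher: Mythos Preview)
Your reduction of the theorem to the combinatorial identity
\[
Q(\mathcal{T}(C))=\sum_{\mathbf{b}\in\mathfrak{b}(C)}q^{\frac{1}{2}(|\mathbf{b}_{\mathbf{M}}|-|\mathbf{b}|)+\mathrm{mindeg}_q Q(\mathcal{T}(C))}
\]
is correct, and so is the partition of $\mathfrak{b}(C)$ according to the first coordinate (equivalently, according to which leaf $v\in L_1(\mathcal{T}(C))$ is plucked first). Your route, however, is genuinely different from the paper's. The paper does \emph{not} proceed by induction on $m$. Instead it sets up a bijection between $\mathfrak{b}(C)$ and the set of admissible plucking orders of $\mathcal{T}(C)$, observes that a single $P_i$-move multiplies the $A$-monomial by $A^4$ and the corresponding $q$-monomial by $q^{-1}$ (this is checked locally, for $P_1$, by looking at how $r(\mathcal{T}(C),v_1)+r(\mathcal{T}(C)-v_1,v_2)$ changes when two consecutive leaves are swapped), and then invokes connectedness of $G(C)$ (Proposition~\ref{ConnectedGraph}) to conclude that $C(A)$ and $Q_A(\mathcal{T}(C))$ differ by a single monomial factor. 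The factor is then pinned down by matching the extreme terms. This global ``$P$-move'' argument is what your induction replaces.

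The gap in your proposal is precisely the weight-matching identity
\[
\tfrac{1}{2}\bigl(|\mathbf{b}_{\mathbf{M}}(C)|-b_1(v)-|\mathbf{b}_{\mathbf{M}}(C_v)|\bigr)=r(T(C),v)+\mathrm{mindeg}_q Q(\mathcal{T}(C_v))-\mathrm{mindeg}_q Q(\mathcal{T}(C)),
\]
which you correctly isolate as the crux but do not prove. Your anchor step already requires $\mathrm{mindeg}_q Q(\mathcal{T}(C))=\mathrm{mindeg}_q Q(\mathcal{T}(C_{v^{*}}))$; this is clear for trees with trivial delay (where both sides are $0$), but for a general delay function it is not a standard fact in \cite{JHP-1,JHP-2,CMPWY3}, and your $v^{*}$ is only the rightmost leaf \emph{inside} $L_1$, so there may be leaves with higher delay to its right that prevent the obvious ``always pluck rightmost'' argument. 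The comparison step for a general $v$ is even less developed: you would need to relate $|\mathbf{b}_{\mathbf{M}}(C_v)|$ and $\mathrm{mindeg}_q Q(\mathcal{T}(C_v))$ across different $v$, which amounts to comparing extremal data of two different Catalan states of $L(m-1,n)$ --- essentially as hard as the theorem itself. The paper's $P$-move argument sidesteps this entirely: it never needs a per-leaf identity, only the local $\pm1$ effect of a transposition together with connectedness of $G(C)$. If you want to salvage the inductive approach, the cleanest fix is to import exactly that local computation to show that, for any two $v,w\in L_1$, the quantity $b_1(v)+2r(T(C),v)+|\mathbf{b}_{\mathbf{M}}(C_v)|+2\,\mathrm{mindeg}_q Q(\mathcal{T}(C_v))$ is independent of $v$; but at that point you are essentially reproducing the paper's argument inside your induction step.
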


\noindent

\begin{proof}
The above statement holds since, in the definition of $Q\left( \mathcal{T}\left( C\right) \right) $, we just follow computations of $C\left( A\right) $ 
outlined in Proposition \ref{CoefficientOfCatalanState} (using the "row by row" approach). In particular, we observe that deleting $b_{1}$ from the sequence 
$\mathbf{b}=\left( b_{1}, b_{2},...,b_{m}\right) \in \mathfrak{b}\left( C\right) $ (i.e. assigning markers according to $b_{1}$ in the first row of $L(m,n)$) 
results in removing a leaf $v\in L_{1}\left( \mathcal{T}\left( C\right) \right) $ which corresponds to the innermost upper cup $e_{b_{1}}$ of $C$. 
Coefficients of the Laurant polynomial $C\left( A\right) $ can then be found by comparing it with $Q\left( \mathcal{T}\left( C\right) \right) $. 
That is, we observe that if a $P_i$ move is applied on $\mathbf{b}$, the new sequence $P_i(\mathbf{b})$ adds a new monomial to $C(A)$ 
obtained by multiplying the monomial $A^{2|\mathbf{b}|-mn}$ by a factor of $A^4$. Since each sequence $\mathbf{b} \in \mathfrak{b}\left( C\right) $ yields a unique 
order of removing vertices from $ \mathcal{T}\left( C\right)$, each $\mathbf{b}$ contributes a monomial $q^{n(\mathbf{b})}$ to $Q\left( \mathcal{T}\left( C\right) \right) $. 
Therefore, a $P_{i}$ move on $\mathbf{b}$ results in adding a monomial $q^{n(\mathbf{b})}$ multiplied by $q^{-1}$ to $Q\left( \mathcal{T}\left( C\right) \right) $. 
We describe this in detail for a $P_1$ move on $\mathbf{b}=(b_1,b_2,b_3,..,b_m)$. By definition, the move changes $\mathbf{b}$   
to $(b_2+1,b_1+1,b_3,...,b_m)$, hence the monomial $A^{2|\mathbf{b}|-mn}$ corresponding to $\mathbf{b}$ changes to $A^{4+2|\mathbf{b}|-mn}$.
Now we observe that, a $P_{1}$ move induces a change of order of leaves $v_1\in L_1(\mathcal{T}\left( C\right)), v_2\in L_1(\mathcal{T}\left( C\right)-v_1)$ which results in decreasing
$r(\mathcal{T}\left( C\right),v_1)+ r(\mathcal{T}\left( C\right)-v_1,v_2)$ by $1$ (i.e. the corresponding monomial is multiplied by $q^{-1}$). 
Since the graph $G\left( C\right) $ is connected, we conclude that%
\begin{equation*}
C\left( A\right) =A^{u\left( C\right) }Q_{A}\left( \mathcal{T}\left( C\right)\right) ,
\end{equation*}%
for some $u\left( C\right) $ that depends only on $C$. Now, to
find $u\left( C\right)$, it suffices to compare the maximal power of $%
C\left( A\right) $ with the minimal power of the variable $q$ in $Q\left( 
\mathcal{T}\left( C\right) \right) $ ( i.e. $\rm{mindeg_{q}}$$Q\left( \mathcal{T}\left( C\right)\right)$). The formula given in the theorem
follows and, in particular, if $C$ has only returns on its top or the left side then $\rm{mindeg_{q}}$$Q\left( \mathcal{T}\left( C\right)\right)=0$, so $C\left( A\right) =A^{2\left\vert \mathbf{b}_{\mathbf{M}}\right\vert -mn}Q_{A}\left( \mathcal{T}\left( C\right) \right) $.
\end{proof}
\begin{corollary}
Let $C$ be a Catalan state with no returns on the floor and
\begin{equation*}
C\left( A\right)=  A^{\rm{mindeg}_{A}C(A)} \sum_{i=0}^N a_iA^{4i}
\end{equation*}
 be its coefficient in $L_{F}(m,n)$, where  $N=\frac{1}{4}(\rm{maxdeg}_{A}C(A) -\rm{ mindeg}_{A}C(A))$. Then $a_{0}=a_{N}=1$ and $a_{i}>0$, for all $i=1,2,...,N-1$.
\end{corollary}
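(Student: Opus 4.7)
The plan is to work directly from the weight formula
\[
C(A) = \sum_{\mathbf{b}\in \mathfrak{b}(C)} A^{2|\mathbf{b}|-mn}
\]
derived at the end of Section~\ref{Sec_2}. Since every cover $\mathbf{b} \ll P_i(\mathbf{b})$ changes $|\mathbf{b}|$ by exactly $2$, and $G(C)$ is connected (Proposition~\ref{ConnectedGraph}), all weights $|\mathbf{b}|$ for $\mathbf{b}\in\mathfrak{b}(C)$ share a common parity. Pulling out the factor $A^{\mathrm{mindeg}_A C(A)} = A^{2|\mathbf{b}_{\mathbf{m}}|-mn}$, the coefficient $a_i$ is therefore exactly the cardinality of $\{\mathbf{b}\in\mathfrak{b}(C) : |\mathbf{b}| = |\mathbf{b}_{\mathbf{m}}|+2i\}$, and $N = \tfrac{1}{2}(|\mathbf{b}_{\mathbf{M}}|-|\mathbf{b}_{\mathbf{m}}|)$.

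Under this interpretation, $a_0 = a_N = 1$ is immediate from the preceding Proposition, which asserts that $\mathbf{b}_{\mathbf{m}}$ and $\mathbf{b}_{\mathbf{M}}$ are the \emph{unique} elements of smallest and largest weight in $\mathfrak{b}(C)$. For $0 < i < N$, I would again invoke Proposition~\ref{ConnectedGraph} to obtain an undirected path $\mathbf{c}_0 = \mathbf{b}_{\mathbf{m}},\, \mathbf{c}_1,\, \ldots,\, \mathbf{c}_k = \mathbf{b}_{\mathbf{M}}$ in $G(C)$ whose consecutive vertices differ by a single $P_j^{\pm 1}$ move, so $|\mathbf{c}_{\ell+1}| - |\mathbf{c}_\ell| = \pm 2$ at each step. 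By the extremality of $|\mathbf{b}_{\mathbf{m}}|$ and $|\mathbf{b}_{\mathbf{M}}|$ in $\mathfrak{b}(C)$, the walk $\ell \mapsto |\mathbf{c}_\ell|$ stays inside $[|\mathbf{b}_{\mathbf{m}}|,\, |\mathbf{b}_{\mathbf{M}}|]$; a short discrete intermediate value argument for $\pm 2$-step walks then shows that every integer of the common parity in that interval is hit by some $|\mathbf{c}_\ell|$, producing for each $i$ a sequence $\mathbf{c}_\ell \in \mathfrak{b}(C)$ of weight $|\mathbf{b}_{\mathbf{m}}|+2i$, whence $a_i \geq 1$.

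I do not foresee a substantive obstacle: the proof assembles three facts already in hand from Section~\ref{Sec_2} (the weight formula for $C(A)$, the uniqueness of $\mathbf{b}_{\mathbf{m}}$ and $\mathbf{b}_{\mathbf{M}}$, and the connectedness of $G(C)$), together with a routine $\pm 2$-step IVT lemma whose proof is a one-line induction on path length. Alternatively, the claim can be translated through Theorem~\ref{MainTheorem} into the equivalent assertion that $Q(\mathcal{T}(C))$ has extremal coefficients equal to $1$ and all intermediate coefficients strictly positive, but the route above is cleaner and uses only material from Section~\ref{Sec_2}.
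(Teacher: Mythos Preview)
Your argument is correct. The paper states this corollary immediately after Theorem~\ref{MainTheorem} without a separate proof, implicitly treating it as a consequence of the identification $C(A)=A^{u(C)}Q_A(\mathcal{T}(C))$; your route instead stays entirely within Section~\ref{Sec_2}, reading the coefficients $a_i$ as level-set cardinalities in the weight-graded poset $\mathfrak{b}(C)$ and invoking the uniqueness of $\mathbf{b}_{\mathbf{m}},\mathbf{b}_{\mathbf{M}}$ together with connectedness of $G(C)$ plus a discrete intermediate-value step. Since the proof of Theorem~\ref{MainTheorem} itself rests on exactly these Section~\ref{Sec_2} facts, the two routes are essentially the same at the level of ingredients; your version simply avoids the detour through $Q$ and makes the ``no gaps'' part explicit, which the paper leaves to the reader.
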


\medskip

\section{Coefficients of Catalan States with Returns on One Side\label{Sec_4}}

The recursion for $Q\left( \mathcal{T}\left( C\right) \right) $ given in
Definition \ref{DefinitionOfQpolynomial} can be used to prove many important
results about $C\left( A\right) $. In particular, we can find a
closed form formula for coefficients $C\left( A\right) $ of Catalan
states $C$ with returns on the ceiling only. Furthermore, for such states
the polynomial $Q\left( \mathcal{T}\left( C\right) \right) $ depends only on
the tree $Q\left( \mathcal{T}\left( C\right) \right) $ rather than its
particular planar embedding (see Theorem \ref{RootedProduct}). To simplify
our notations, let $T$ stand for the rooted tree $\left( T,v_{0}\right) $. We recall also the notation of the $q$-analogue of an integer $m$ and 
$q$-analogue of a multinomial coefficient. The $q$-analogue of an
integer $m$ is defined by\newline
$[m]_{q}=1+q+q^{2}+...+q^{m-1},$ and let $[m]_{q}!=\left[ 1\right] _{q}\cdot %
\left[ 2\right] _{q}\cdot ...\cdot \left[ m\right] _{q}$. Therefore, the $q$-analogue
of the multinomial coefficient $\binom{a_{1}+a_{2}+...a_{k}}{%
a_{1},a_{2},...,a_{k}}_{q}$ is given by%
\begin{equation*}
\binom{{a_{1}+a_{2}+...a_{k}}}{{a_{1},a_{2},...,a_{k}}_{q}}_{q}=\frac{\left[
a_{1}+a_{2}+...a_{k}\right] _{q}!}{\left[ a_{1}\right] _{q}!\left[ a_{2}%
\right] _{q}!\cdot ...\cdot \left[ a_{k}\right] _{q}!}.
\end{equation*}%
In particular, the binomial coefficient (Gauss polynomial) is defined by%
\begin{equation*}
\binom{a_{1}+a_{2}}{a_{1}}_{q}=\binom{a_{1}+a_{2}}{a_{1},a_{2}}_{q}.
\end{equation*}
Basic properties of the plucking polynomial $Q(T)$ are summarized in the following theorem, \cite{JHP-1,JHP-2}.
\begin{theorem}
\label{RootedProduct}

\begin{description}
\item[(i)] Let $T=T_{1}\vee T_{2}$ be the wedge product of rooted trees $%
T_{1}$ and $T_{2}$ having the common root $v_{0}$. Then,%
\begin{equation*}
Q\left( T\right) =\binom{\left\vert E\left( T\right) \right\vert }{%
\left\vert E\left( T_{1}\right) \right\vert ,\left\vert E\left( T_{2}\right)
\right\vert }_{q}Q\left( T_{1}\right) Q\left( T_{2}\right) .
\end{equation*}
\begin{figure}[h]
\centering
\begin{minipage}{.20\textwidth}
\centering
\includegraphics[width=\linewidth]{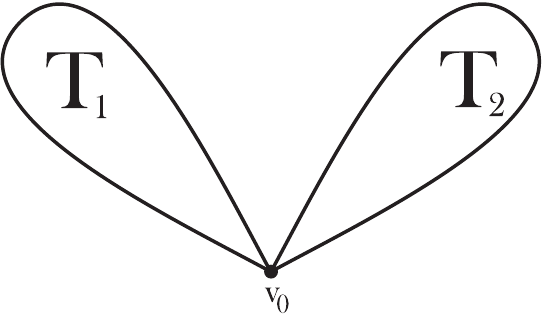}
\caption{Figure 4.1}
\label{fig:WedgeProd}
\end{minipage}
\end{figure}
\item[(ii)] $($\emph{Wedge Product Formula}$)$ If $T=\bigvee%
\limits_{i=1}^{k}T_{i}$ is the wedge product of rooted trees $T_{i},$ $%
i=1,2,...,k$ which have the common root $v_{0}$, then%
\begin{equation*}
Q\left( T\right) =\binom{\left\vert E\left( T\right) \right\vert }{%
\left\vert E\left( T_{1}\right) \right\vert ,\left\vert E\left( T_{2}\right)
\right\vert ,...,\left\vert E\left( T_{k}\right) \right\vert }%
_{q}\prod\limits_{i=1}^{k}Q\left( T_{i}\right) .
\end{equation*}

\item[(iii)] $($\emph{Product Formula}$)$ Let $v\in V\left( T\right) $ and
denote by $T^{v}$ the rooted subtree of $T$ with the root $v$. Assume that $%
T^{v}=\bigvee\limits_{i=1}^{k\left( v\right) }T_{i}^{v}$, where $T_{i}^{v}$
are rooted trees with the common root $v$, and let%
\begin{equation*}
W\left( v\right) =\binom{\left\vert E\left( T^{v}\right) \right\vert }{%
\left\vert E\left( T_{1}^{v}\right) \right\vert ,\left\vert E\left(
T_{2}^{v}\right) \right\vert ,...,\left\vert E\left( T_{k\left( v\right)
}^{v}\right) \right\vert }_{q}
\end{equation*}%
be the weight of $v$. Then, $Q\left( T\right) =\prod\limits_{v\in
V\left( T\right) }W\left( v\right) .$\newline
In particular, $Q\left( T\right) $ does not depend on a plane embedding.
\end{description}
\end{theorem}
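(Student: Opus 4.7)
The plan is to prove (i) by induction on $|E(T)|$, deduce (ii) by iterating (i) with the standard multinomial identity, and then derive (iii) by a second induction that walks the tree from the root down, applying (ii) at each vertex.

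For part (i), I would fix a convention (say, $T_1$ sits to the left of $T_2$ at the common root $v_0$) and split the recursion for $Q(T_1\vee T_2)$ in Definition \ref{DefinitionOfQpolynomial} according to which side a leaf in $L_1$ lies on. For $v\in L_1(T_1)$ one has $r(T,v)=r(T_1,v)+|E(T_2)|$, since every non-root vertex of $T_2$ lies strictly right of the path from $v$ to $v_0$; for $v\in L_1(T_2)$ one has $r(T,v)=r(T_2,v)$. Also, $(T-v)$ is again a wedge, either $(T_1-v)\vee T_2$ or $T_1\vee(T_2-v)$, and the delay functions on the two sides are updated independently. Applying the inductive hypothesis to each summand factors out $Q(T_1)Q(T_2)$, and the bracketed combination of binomial coefficients becomes
\begin{equation*}
q^{|E(T_2)|}\binom{|E(T)|-1}{|E(T_1)|-1}_{q}+\binom{|E(T)|-1}{|E(T_1)|}_{q},
\end{equation*}
which collapses to $\binom{|E(T)|}{|E(T_1)|}_q$ by the standard $q$-Pascal identity. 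This completes the induction.

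For part (ii), I would write $T=T_1\vee(T_2\vee\cdots\vee T_k)$ and induct on $k$. Applying (i) once gives a binomial coefficient $\binom{|E(T)|}{|E(T_1)|}_q$ times $Q(T_1)\,Q(T_2\vee\cdots\vee T_k)$, and the inductive hypothesis expands the second factor with a multinomial coefficient in the remaining $|E(T_i)|$'s. The identity
\begin{equation*}
\binom{a_1+\cdots+a_k}{a_1}_{q}\binom{a_2+\cdots+a_k}{a_2,\ldots,a_k}_{q}
=\binom{a_1+\cdots+a_k}{a_1,a_2,\ldots,a_k}_{q}
\end{equation*}
then gives the claimed wedge product formula.

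For part (iii), I would induct on $|V(T)|$. Writing $T^{v_0}=T$ as $\bigvee_{i=1}^{k(v_0)}T_i^{v_0}$, (ii) gives $Q(T)=W(v_0)\prod_i Q(T_i^{v_0})$; applying the inductive hypothesis to each $T_i^{v_0}$ expands $Q(T_i^{v_0})$ as the product of weights over its vertex set, and these vertex sets together with $v_0$ partition $V(T)$, producing $Q(T)=\prod_{v\in V(T)}W(v)$. The last sentence (independence of the planar embedding) is then immediate: each $W(v)$ only records the \emph{multiset} of edge counts of the subtrees rooted at $v$, so reordering the children at any vertex does not alter $Q(T)$. The main obstacle is purely bookkeeping in (i), namely checking the identification $r(T,v)=r(T_1,v)+|E(T_2)|$ and that the delay functions split cleanly across the wedge; once this is set up, everything reduces to $q$-binomial identities.
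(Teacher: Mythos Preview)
The paper does not give its own proof of this theorem; it is stated with a citation to \cite{JHP-1,JHP-2}. Your argument is the standard one and matches what appears in those references: induction on $|E(T)|$ for (i), splitting the leaf sum by side and reducing to the $q$-Pascal identity $\binom{n}{k}_q=q^{\,n-k}\binom{n-1}{k-1}_q+\binom{n-1}{k}_q$, then iterating for (ii) and walking down the tree for (iii).

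One caution. Theorem~\ref{RootedProduct} is stated for the plucking polynomial $Q(T)$ \emph{without} a delay function (equivalently $f\equiv 1$), as the paper makes explicit just before the statement. Your remark that ``the delay functions on the two sides are updated independently'' is correct only in this trivial case. With a genuine delay, removing a leaf from $T_1$ lowers the delay of every leaf of $T_2$ by one via $f_v(u)=\max\{1,f(u)-1\}$, so the two sides interact and the wedge formula can fail (e.g.\ take $T_1$ a single edge with delay $1$ and $T_2$ a single edge with delay $2$: then $Q(T_2,f_2)=0$ but $Q(T_1\vee T_2,f)=q\neq 0$). Since the theorem concerns only the no-delay case your induction is fine; just drop the references to delay, or state explicitly that $f\equiv 1$ so that $L_1(T)=L(T)$ at every stage.
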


\begin{remark}
\emph{Since }$C\left( A\right) =A^{2\left\vert \mathbf{b}_{\mathbf{M}%
}\right\vert -mn}Q_{A}\left( \mathcal{T}\left( C\right)\right)$, \emph{\ the result of
Theorem \ref{RootedProduct} }$\left( \mathbf{ii}\right) $\emph{\ gives us a closed form formula for computing }$C\left( A\right) $\emph{, i.e.}%
\begin{equation*}
C\left( A\right) =A^{2\left\vert \mathbf{b}_{\mathbf{M}}\right\vert
-mn}(\prod\limits_{v\in V\left( T\right) }W_{A}\left( v\right)),
\end{equation*}
\emph{where} $W_{A}\left( v\right)$ \emph{is the Laurant polynomial in} $A$ \emph{obtained from} $W\left( v\right)$ \emph{by substituting} $q=A^{-4}$.
\end{remark}

Here we list some further, important from our perspective, properties of $Q\left( T\right) $ and its coefficients (see \cite{JHP-1}, \cite{JHP-2}, and \cite{CMPWY3} for proofs). 
Naturally, these imply the corresponding properties of $%
C\left( A\right) $.

\begin{figure}[h]
\centering
\begin{minipage}{.25\textwidth}
\centering
\includegraphics[width=\linewidth]{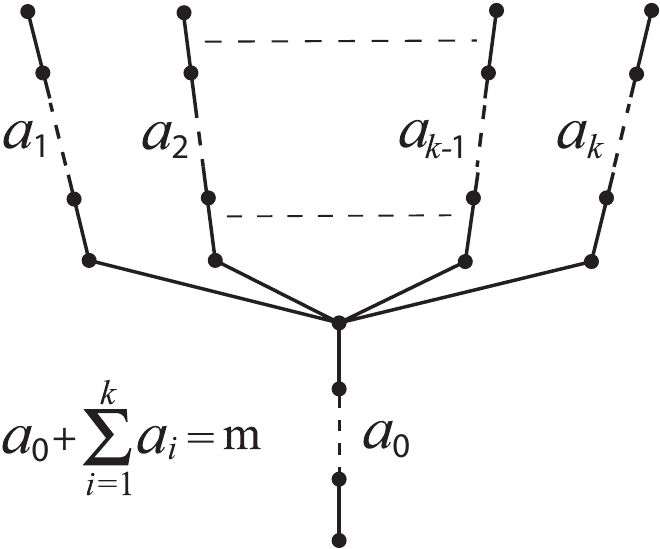}
\caption{Figure 4.2}
\label{fig:TopRetWithoutCorners}
\end{minipage}\hspace{2mm} 
\begin{minipage}{.5\textwidth}
\centering
\includegraphics[width=\linewidth]{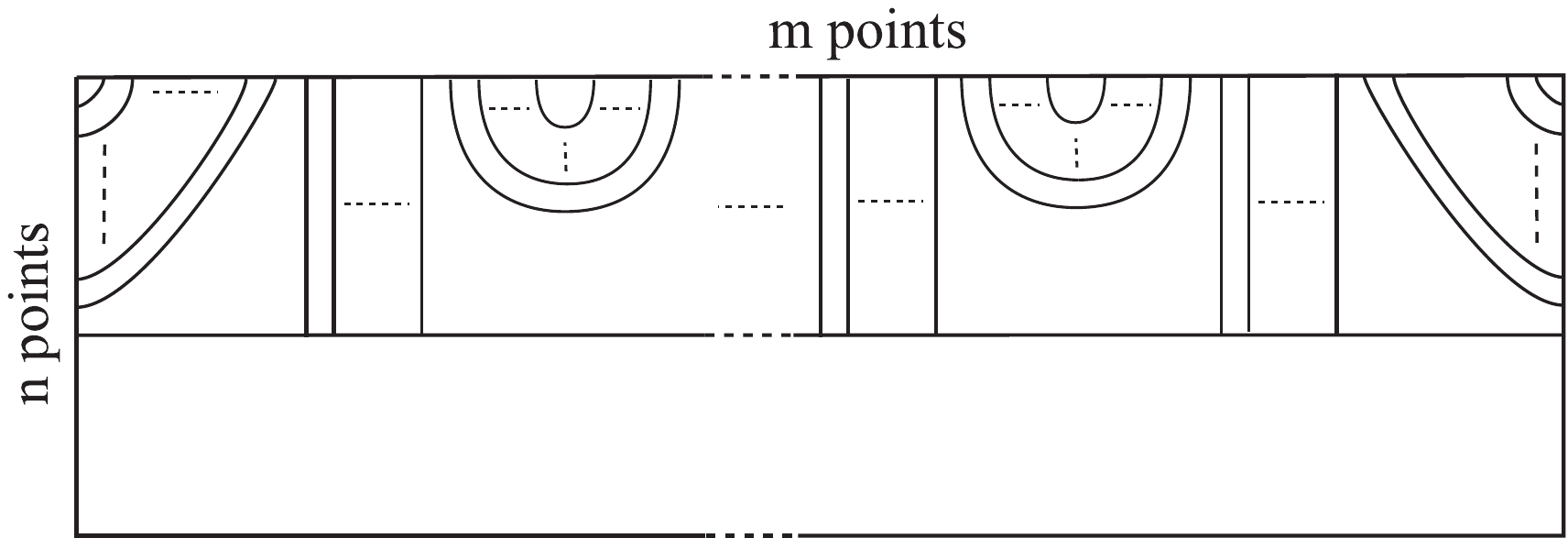}
\caption{Figure 4.3}
\label{fig:TopRetWithCorners}
\end{minipage}
\end{figure}

\begin{corollary}
Let $C$ be a Catalan state with the associated rooted tree $\mathcal{T}\left( C\right)$ as in \emph{\textrm{Figure}~\ref%
{fig:TopRetWithoutCorners}}, \emph{(\textrm{Figure}~\ref{fig:TopRetWithCorners} shows an example of such a $C$)}
then 
\begin{equation*}
C\left( A\right) =A^{2\left\vert \mathbf{b}_{\mathbf{M}}\right\vert -mn}\binom{a_{1}+a_{2}+...+a_{k}}{a_{1},a_{2},...,a_{k}}_{q=A^{-4}}
\end{equation*}
\end{corollary}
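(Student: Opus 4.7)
The plan is to recognize the tree $\mathcal{T}(C)$ depicted in Figure 4.2 as a wedge of $k$ linear rooted subtrees (paths) $T_{1}, T_{2}, \ldots, T_{k}$ joined at the common root $v_{0}$, where $T_{i}$ has $a_{i}$ edges. Since $C$ has returns only on the ceiling (read off from this tree shape, which forces each leaf to have delay value $1$), the simplified conclusion of Theorem \ref{MainTheorem} applies:
\begin{equation*}
C(A) = A^{2|\mathbf{b}_{\mathbf{M}}| - mn} \, Q_{A}(\mathcal{T}(C)).
\end{equation*}
So it suffices to compute $Q(\mathcal{T}(C))$ and substitute $q = A^{-4}$.

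First I would invoke the Wedge Product Formula, Theorem \ref{RootedProduct}(ii), applied at the root $v_{0}$. This immediately gives
\begin{equation*}
Q(\mathcal{T}(C)) = \binom{a_{1}+a_{2}+\cdots+a_{k}}{a_{1},a_{2},\ldots,a_{k}}_{q} \prod_{i=1}^{k} Q(T_{i}).
\end{equation*}
Next I would verify that $Q(T_{i}) = 1$ for each linear path $T_{i}$. This follows by a direct induction on $a_{i}$ using Definition \ref{DefinitionOfQpolynomial}: at every stage of plucking, the current tree is a single path, so $L_{1}$ contains exactly one leaf $v$ with no vertex to its right, contributing $q^{r(T,v)} = q^{0} = 1$. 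The base case $a_{i}=0$ gives $Q(T_{i}) = 1$ by definition. Hence each factor $Q(T_{i})$ is trivial and the product collapses.

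Combining these two facts yields $Q(\mathcal{T}(C)) = \binom{a_{1}+\cdots+a_{k}}{a_{1},\ldots,a_{k}}_{q}$, and substituting into the formula from Theorem \ref{MainTheorem} produces
\begin{equation*}
C(A) = A^{2|\mathbf{b}_{\mathbf{M}}| - mn} \binom{a_{1}+a_{2}+\cdots+a_{k}}{a_{1},a_{2},\ldots,a_{k}}_{q=A^{-4}},
\end{equation*}
which is the asserted identity.

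The one step requiring care, and the closest thing to an obstacle, is justifying why Theorem \ref{MainTheorem} reduces to its simplified form in this situation, i.e.\ why $\mathrm{mindeg}_{q} Q(\mathcal{T}(C)) = 0$. Under the hypothesis that $C$ has returns only on the ceiling, every leaf of $\mathcal{T}(C)$ has delay $1$, so the plucking process that always selects the rightmost available leaf contributes the monomial $q^{0}$ with coefficient $1$; this guarantees that the minimal degree vanishes and that no extra power of $A$ needs to be inserted. Once this is in place, the remainder of the proof is simply the mechanical combination of Theorem \ref{RootedProduct}(ii), the triviality $Q(T_{i})=1$ for paths, and Theorem \ref{MainTheorem}.
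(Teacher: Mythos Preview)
Your proposal is correct and follows essentially the same approach as the paper, which presents this corollary as an immediate consequence of the Remark preceding it (i.e.\ the simplified form of Theorem~\ref{MainTheorem}) together with Theorem~\ref{RootedProduct}(ii) applied to the tree in Figure~4.2. The paper does not spell out a separate proof, but your explicit verification that $Q(T_i)=1$ for a linear path and your justification that $\mathrm{mindeg}_q Q(\mathcal{T}(C))=0$ are exactly the details implicit in the paper's presentation.
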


\begin{theorem}
\label{PropertiesOfCoef.}Let $C\in \mathrm{Cat}_{F}\left( m,n\right) $ be a
Catalan state with returns on its top only and $T=\mathcal{T}\left( C\right)$ be the corresponding
rooted tree. If%
\begin{equation*}
Q\left( T\right) =\sum\limits_{i=0}^{N}a_{i}q^{i},
\end{equation*}%
then

\begin{description}
\item[(i)] $a_{0}=a_{N}=1$ and $a_{i}>0,$ $i=0,1,2,...,N$.

\item[(ii)] $a_{i}=a_{N-i},$ $i=0,1,2,...,N,$ i.e. $Q\left( T\right) $ is a
palindromic polynomial $($also called symmetric polynomial$)$.

\item[(iii)] The sequence $\left\{ a_{i}\right\} _{i=0}^{N}$ is unimodal%
\footnote{%
Conditions for the strict unimodality of coefficients are given in \cite%
{CMPWY2}.}, i.e. there is $k$ such that \newline $a_{0}\leq a_{1}\leq ...\leq
a_{k-1}\leq a_{k}\geq a_{k+1}\geq ...\geq a_{N}.$

\item[(iv)] $Q\left( T\right) $ is a product of $q$-binomial coefficients $($%
Gaussian polynomials$)$.

\item[(v)] $Q\left( T\right) $ is a product of cyclotomic polynomials.

\item[(vi)] The degree of the polynomial $Q\left( T\right) $ is given by%
\begin{equation*}
\deg \left( Q\left( T\right) \right) =\sum\limits_{v\in V\left( T\right)
}\deg \left( W\left( v\right) \right) ,
\end{equation*}%
where $W\left( v\right) $ is defined as in \emph{Theorem} \ref%
{RootedProduct} $\left( \mathbf{iii}\right) $ and $\deg \left( W\left(
v\right) \right) =\sum\limits_{1\leq i<j\leq k\left( v\right) }\left\vert
E\left( T_{i}^{v}\right) \right\vert \left\vert E\left( T_{j}^{v}\right) \right\vert$.
\end{description}
\end{theorem}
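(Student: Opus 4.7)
The plan is to derive all six properties from Theorem \ref{RootedProduct}(iii), which expresses $Q(T) = \prod_{v \in V(T)} W(v)$ with each weight $W(v)$ a $q$-multinomial coefficient. This reduces everything to verifying the stated properties for a single $q$-binomial coefficient and checking that each property is preserved under taking products.

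For (iv) I would use the telescoping identity
\begin{equation*}
\binom{a_1+\cdots+a_k}{a_1,\ldots,a_k}_q = \prod_{j=2}^{k}\binom{a_1+\cdots+a_j}{a_j}_q,
\end{equation*}
which together with the Product Formula immediately displays $Q(T)$ as a product of Gaussian polynomials. Property (v) then follows because every Gaussian polynomial is a product of cyclotomic polynomials, a classical identity obtained by canceling the factorizations $[n]_q! = \prod_{d \le n}\Phi_d(q)^{\lfloor n/d \rfloor}$. For the remaining properties I would first verify them for a single $\binom{n}{k}_q$: positivity with $a_0 = a_{k(n-k)} = 1$ is standard (from, for instance, the recursive Pascal-type identity); palindromicity is $\binom{n}{k}_q = q^{k(n-k)}\binom{n}{k}_{q^{-1}}$; and unimodality is a classical theorem, provable for instance via the representation theory of $\mathfrak{sl}_2$ or by the combinatorial bijections of O'Hara. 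Positivity, the extremal-coefficient condition $a_0 = a_N = 1$, and palindromicity all propagate through products in a routine way (palindromicity uses that the product of two palindromic polynomials of degrees $d_1, d_2$ is palindromic of degree $d_1 + d_2$).

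The main obstacle is propagating \emph{unimodality} through products, since a product of arbitrary unimodal polynomials need not be unimodal. The key input that rescues (iii) is the classical theorem (due to Stanley, with proofs via the hard Lefschetz theorem applied to products of projective spaces, and combinatorially by Proctor) asserting that a product of symmetric unimodal polynomials with nonnegative coefficients is symmetric and unimodal; applying this inductively to the factorization of $Q(T)$ from (iv) yields (iii). Finally, (vi) is immediate from the additivity of degree under products together with the elementary identity $\deg \binom{a_1+\cdots+a_{k(v)}}{a_1,\ldots,a_{k(v)}}_q = \sum_{1 \le i < j \le k(v)} a_i a_j$, which specializes to the stated formula once we substitute $a_i = |E(T_i^v)|$.
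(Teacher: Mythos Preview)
The paper does not actually prove this theorem; immediately before the statement it says ``see \cite{JHP-1}, \cite{JHP-2}, and \cite{CMPWY3} for proofs'' and gives no argument of its own. Your sketch is correct and is essentially the argument one finds in those references: everything is deduced from the Product Formula of Theorem~\ref{RootedProduct}(iii), which exhibits $Q(T)$ as a product of $q$-multinomial (hence, via your telescoping identity, $q$-binomial) coefficients, after which (i), (ii), (iv), (v), (vi) are routine and (iii) follows from unimodality of a single Gaussian polynomial together with the closure of symmetric unimodality under products.

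One minor remark on (iii): the closure lemma you invoke---that a product of palindromic, unimodal polynomials with nonnegative coefficients is again palindromic and unimodal---is entirely elementary (write each factor as a nonnegative combination of ``sticks'' $q^a+q^{a+1}+\cdots+q^{d-a}$ and check the product of two sticks directly); it appears, with this elementary proof, in Stanley's survey. The hard Lefschetz and Proctor references you mention are really alternative routes to the unimodality of the full product in one shot, not to the closure lemma itself. This is only a point about attribution; the mathematics of your argument is sound.
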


We can characterize polynomials which can be expressed as $Q\left( T\right) $
for some rooted tree $T$.

\begin{theorem}\emph{\cite{CMPWY1,CMPWY3}}
\label{PolynomialCond}Let $P\left( q\right) $ be a polynomial. Then $P\left(
q\right) =Q\left( T\right) $ for some rooted tree $T$ if the following
conditions are satisfied\emph{:}

\begin{description}
\item[(i)] $P\left( q\right) $ is a product of Gaussian polynomials, and

\item[(ii)] $P\left( q\right) =\frac{\left[ N\right] _{q}!}{\left[ b_{1}%
\right] _{q}\left[ b_{2}\right] _{q}\cdot ...\cdot \left[ b_{k}\right] _{q}}$%
, where $2\leq b_{1}\leq b_{2}\leq $ $...\leq b_{k}<N$.
\end{description}
\end{theorem}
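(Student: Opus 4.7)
The plan is to give a constructive proof, building a rooted tree $T$ with $Q(T)=P(q)$ by successively incorporating the Gaussian factors of $P(q)$ via the Wedge Product Formula of Theorem~\ref{RootedProduct}(ii).

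Two elementary building blocks are central. First, a path of length $n$ rooted at one end has plucking polynomial $1$, since by Theorem~\ref{RootedProduct}(iii) every internal vertex contributes weight $\binom{n'}{n'}_q=1$. Consequently, the tree $\Pi_{a,b}$ obtained by joining two paths of lengths $a$ and $b$ at a common root satisfies $Q(\Pi_{a,b})=\binom{a+b}{a}_q$ by Theorem~\ref{RootedProduct}(ii). Second, the ``stem'' operation---attaching a single edge above the root of $T$ so that $T$ becomes the only subtree of the new root---preserves the plucking polynomial while increasing the edge count by one, because the weight of the new root is $\binom{n}{n}_q=1$.

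The inductive construction proceeds as follows. Using condition~(i), I would write $P(q)=\prod_{i=1}^{r}\binom{c_i+d_i}{c_i}_q$ with the Gaussian factors arranged in a suitable order, start from the tree $T^{(r)}=\Pi_{c_r,d_r}$, and process the remaining factors from right to left. At step $i$, given $T^{(i+1)}$ that realizes $\prod_{j>i}\binom{c_j+d_j}{c_j}_q$, I would first apply the stem operation enough times to bring the edge count up to $d_i$, then form the wedge with a path of length $c_i$. By Theorem~\ref{RootedProduct}(ii), the plucking polynomial of the result is multiplied by $\binom{c_i+d_i}{c_i}_q$, and its edge count becomes $c_i+d_i$. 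Iterating down to $i=1$ produces a tree $T=T^{(1)}$ with $Q(T)=P(q)$ and $|E(T)|=c_1+d_1$, which matches the $N$ of condition~(ii).

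The main obstacle is guaranteeing that a Gaussian factorization with a workable order exists---namely, a choice of $P(q)=\prod\binom{c_i+d_i}{c_i}_q$ with $c_1+d_1=N$ and $c_{i+1}+d_{i+1}\le d_i$ for each $i$, so that every stem step is legal. This is the combinatorial heart of the argument, and it is precisely where condition~(ii) is used: the representation $P(q)=[N]_q!/([b_1]_q\cdots[b_k]_q)$ with $2\le b_i<N$ pins down the value of $N$ and constrains the admissible Gaussian factorizations so that at least one compatible ordering exists. The remaining work is routine bookkeeping to check that the multinomial coefficients generated by the iterated wedge products cancel the denominator $[b_1]_q\cdots[b_k]_q$ exactly.
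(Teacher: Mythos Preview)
The paper does not prove Theorem~\ref{PolynomialCond}; it is quoted from \cite{CMPWY1,CMPWY3} without argument. So there is nothing in the paper to compare against, and your proposal has to stand on its own.

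Your building blocks are correct: a rooted path has $Q=1$, the stem operation preserves $Q$ while increasing the edge count by one, and wedging a tree on $d$ edges with a path of length $c$ multiplies $Q$ by $\binom{c+d}{c}_q$. If the promised ordering of Gaussian factors exists, your caterpillar construction produces the desired tree.

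The gap is precisely where you flag it, but you then wave it away as ``routine bookkeeping''. It is not. What you need is: given that $P(q)$ is simultaneously a product of Gaussians and equal to $[N]_q!/\prod_j[b_j]_q$, there exists \emph{some} Gaussian factorization $P=\prod_i\binom{c_i+d_i}{c_i}_q$ satisfying $c_1+d_1=N$ and $c_{i+1}+d_{i+1}\le d_i$ for every $i$. Condition~(i) only hands you \emph{a} factorization into Gaussians, not one obeying these nesting constraints; condition~(ii) pins down $N$ but does not, by itself, select or reorder the factors for you. Establishing that a compatible factorization always exists is the actual content of the theorem. Via the hook-length form of the product formula,
\[
Q(T)=\frac{[N]_q!}{\prod_{v\neq v_0}\bigl[\,1+|E(T^v)|\,\bigr]_q},
\]
the problem is equivalent to realizing the multiset $\{b_1,\ldots,b_k\}$ as the collection of subtree sizes $\ge 2$ in some rooted tree with $N$ edges, and this realization question is exactly what the cited papers \cite{CMPWY1,CMPWY3} address. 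Your sketch identifies the obstacle correctly but does not overcome it, so the argument as written is incomplete.
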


\section{Applications\label{Sec_5}}

In this section, we consider the case of a realizable Catalan state $C\in 
\mathrm{Cat}\left( m,n\right) $ which admits a horizontal cross-section of
size $n$ or a vertical one of size $m$. Recall, for a Catalan state $C\in 
\mathrm{Cat}\left( m,n\right) $, we denote by $\overline{C}$ the Catalan
state obtained from $C$ by reflecting $C$ about the $x$-axis (see Figure~\ref%
{fig:ReflectionCatalan}).

\begin{figure}[h]
\centering
\begin{minipage}{.5\textwidth}
\centering
\includegraphics[width=\linewidth]{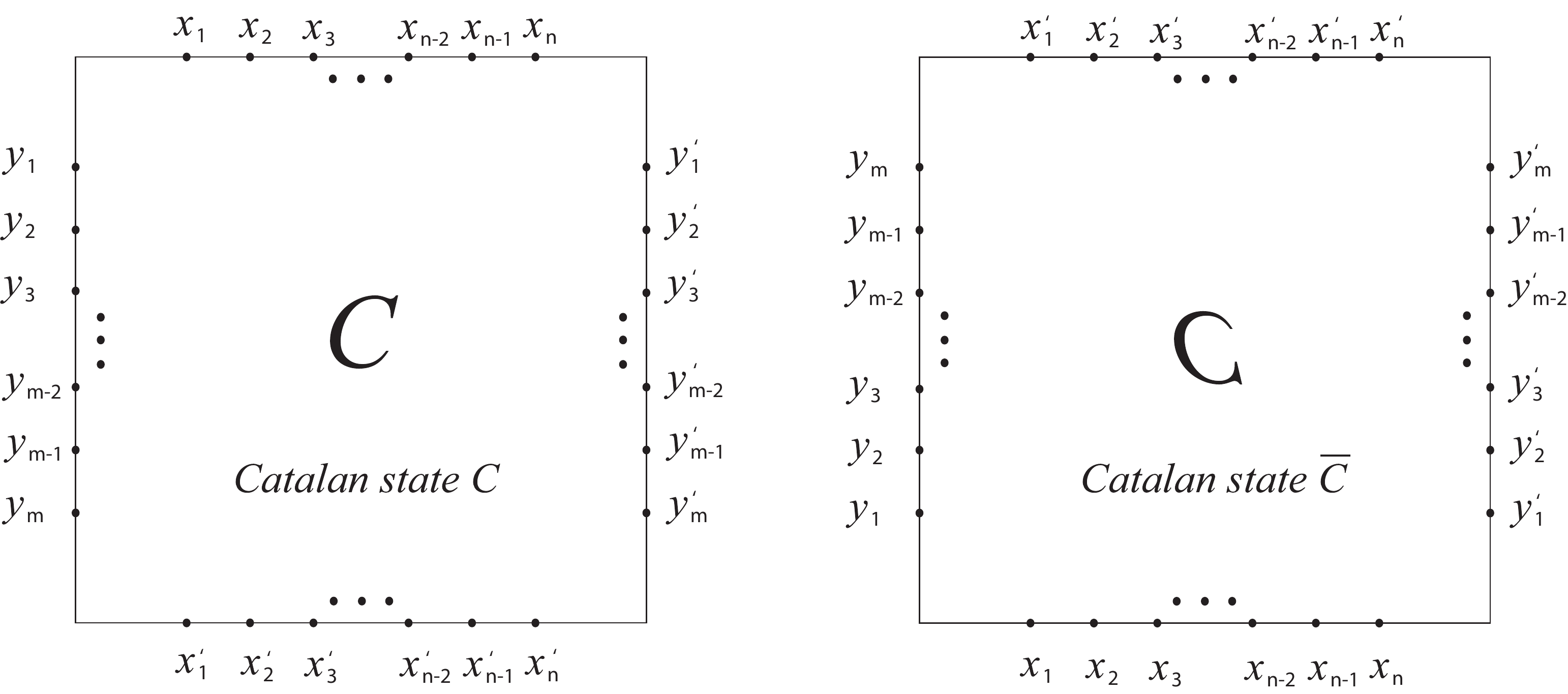}
\caption{Figure 5.1}
\label{fig:ReflectionCatalan}
\end{minipage}
\end{figure}

\begin{theorem}
\label{FactorizationThm}Let $C\in \mathrm{Cat}\left( m,n\right) $ be a
realizable Catalan state with a cross-section of size $m$, that is, the
product\footnote{The operation $\ast _{v}$ corresponds to the product in Tempery-Lieb algebra
as described by Louis Kauffman \cite{Kauffman}.} $C_{1}\ast _{v}C_{2}=C$ and
the common boundary of $C_{1}$ and $C_{2}$ is intersected by arcs of $C$
exactly $n$ times $($see \emph{Figure}~\emph{\ref{fig:VerticalProduct}} and an example shown in \emph{Figure}~\emph{\ref{fig:ExampleVerticalProd}}$)$.
Then $C_{1}$ and $\overline{C}_{2}$ are in $L_{F}\left( m,n\right) $ and%
\begin{equation*}
C\left( A\right) =C_{1}\left( A\right) \overline{C}_{2}\left( A\right)
\end{equation*}
\end{theorem}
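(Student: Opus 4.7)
The approach is to factor $L(m,n)$ as the tangle product $L(m,n) = L(k,n) *_v L(m-k,n)$, where the horizontal cross-section realizing the splitting $C = C_1 *_v C_2$ sits between rows $k$ and $k+1$ of crossings. Substituting the Catalan-state expansions of each factor yields
\[
L(m,n) \;=\; \sum_{C_1',\, C_2'} C_1'(A)\, C_2'(A)\, \bigl(C_1' *_v C_2'\bigr),
\]
where the sum runs over Catalan states in the two sub-rectangles. Inside the skein algebra, $C_1' *_v C_2' = (-A^2-A^{-2})^{\ell(C_1',C_2')}\,(C_1' \vee C_2')$, where $\ell(C_1',C_2')$ counts the closed loops formed during gluing and $C_1' \vee C_2'$ is the resulting loopless Catalan state. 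Comparing coefficients of $C$ on the two sides gives
\[
C(A) \;=\; \sum_{\substack{(C_1',C_2') \\ C_1' \vee C_2' = C}} C_1'(A)\, C_2'(A)\, (-A^2-A^{-2})^{\ell(C_1',C_2')}.
\]

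The crux is to show that this sum collapses to the single pair $(C_1,C_2)$ with $\ell(C_1,C_2) = 0$. The cross-section hypothesis states that each of the $n$ shared-boundary points is an interior point of an arc of $C$, so $C$ has no returns at the shared boundary. Suppose $C_1'$ contains a floor-return joining two shared-boundary points $i$ and $j$. Either $C_2'$ has no matching ceiling-return, in which case $C_1' \vee C_2'$ inherits a return at the shared boundary, contradicting the cross-section hypothesis on $C$; or $C_2'$ has a matching ceiling-return $\{i,j\}$, in which case gluing forms a closed loop, and after removing it the points $i,j$ are no longer traversed by any arc of $C_1' \vee C_2'$, again contradicting the fact that $C$ has through-arcs at all $n$ shared-boundary positions. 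Thus $C_1'$ has no floor-returns at the shared boundary, so $C_1' = C_1$; symmetrically $C_2' = C_2$; and no closed loops form during this unique gluing, giving $\ell(C_1,C_2) = 0$. Hence $C(A) = C_1(A)\,C_2(A)$, where $C_2(A)$ denotes the coefficient of $C_2$ in the expansion of $L(m-k,n)$.

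It remains to replace $C_2(A)$ by $\overline{C}_2(A)$. Reflection about the horizontal axis maps $L(m-k,n)$ to itself as a tangle, up to a permutation of the boundary labels, so the Catalan-state expansion satisfies $\sum_{C'} C'(A)\,C' = \sum_{C'} C'(A)\,\overline{C'}$; comparing coefficients yields $C_2(A) = \overline{C}_2(A)$. Equivalently, the reflection induces a bijection between Kauffman states of $L(m-k,n)$ realizing $C_2$ and those realizing $\overline{C}_2$ that preserves $p(s) - n(s)$ and $|D_s|$. The reflection appears in the statement of the theorem because $C_2$ has no returns on its ceiling but may have returns on its floor, whereas $\overline{C}_2$ lies in $\mathrm{Cat}_F(m-k,n)$, the setting in which the plucking-polynomial formulas of Sections~3--4 apply directly. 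Combining the two steps yields $C(A) = C_1(A)\,\overline{C}_2(A)$.

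The most delicate step is the collapse of the sum to a single term: the two-case argument above must be made fully rigorous, tracking planarity of Catalan states and the precise effect of closed-loop removal on the arcs incident to the shared boundary. The cross-section hypothesis is used essentially in both cases, and turning the geometric contradictions into airtight combinatorial statements about arc endpoints is the main obstacle.
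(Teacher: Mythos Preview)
The paper states this theorem without proof, so there is no authors' argument to compare against directly.  Your overall strategy is the natural one: factor $L(m,n)=L(k,n)\ast_v L(m-k,n)$, expand each factor over Catalan states, and use the cross-section hypothesis to collapse the double sum to the single pair $(C_1,C_2)$.  The collapse (your step~5) becomes airtight with a simple count: a through-arc of $C_1'\vee C_2'$ meets the shared boundary an odd number of times ($\geq 1$), a same-side arc or closed loop an even number; since $C$ has exactly $n$ through-arcs and there are only $n$ shared points, every through-arc meets the cut exactly once and nothing else touches it.  Hence $\ell=0$, $C_1'=C_1$, $C_2'=C_2$, and you obtain $C(A)=C_1(A)\,C_2(A)$.

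Step~6, however, contains a genuine error.  Reflection about a horizontal line reverses the orientation of the plane and therefore interchanges the $A$- and $A^{-1}$-smoothings at every crossing of $L(m-k,n)$.  The bijection you describe between Kauffman states realizing $C_2$ and those realizing $\overline{C_2}$ thus \emph{negates} $p(s)-n(s)$ rather than preserving it, and the correct identity is $\overline{C_2}(A)=C_2(A^{-1})$, not $\overline{C_2}(A)=C_2(A)$.  This is visible already for $m-k=1$, $n=2$: the state with $\mathbf b=(2)$ has coefficient $A^{2}$, its horizontal reflection is the state with $\mathbf b=(0)$, with coefficient $A^{-2}$.  Consequently the identity your steps 1--5 prove is $C(A)=C_1(A)\,C_2(A)$, which agrees with the displayed formula only when $C_2(A)$ happens to be palindromic (e.g.\ when $C_2$ has returns on its floor only, cf.\ Theorem~\ref{PropertiesOfCoef.}(ii)).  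In general it does not: with $k=m-k=1$, $n=2$, $C_1$ the $\mathbf b=(0)$ state and $C_2$ the $\mathbf b=(2)$ state, one computes $C(A)=1$ while $C_1(A)\,\overline{C_2}(A)=A^{-4}$.  So your argument correctly establishes the factorization $C(A)=C_1(A)\,C_2(A)$; the passage to $\overline{C_2}(A)$ does not go through as written, and the displayed formula appears either to need an extra hypothesis or to be read with $\overline{C_2}$ serving only to place the second factor in $\mathrm{Cat}_F$ so that the plucking-polynomial machinery applies after reflection.
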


\begin{figure}[h]
\centering%
\begin{minipage}{.25\textwidth}
\centering
\includegraphics[width=\linewidth]{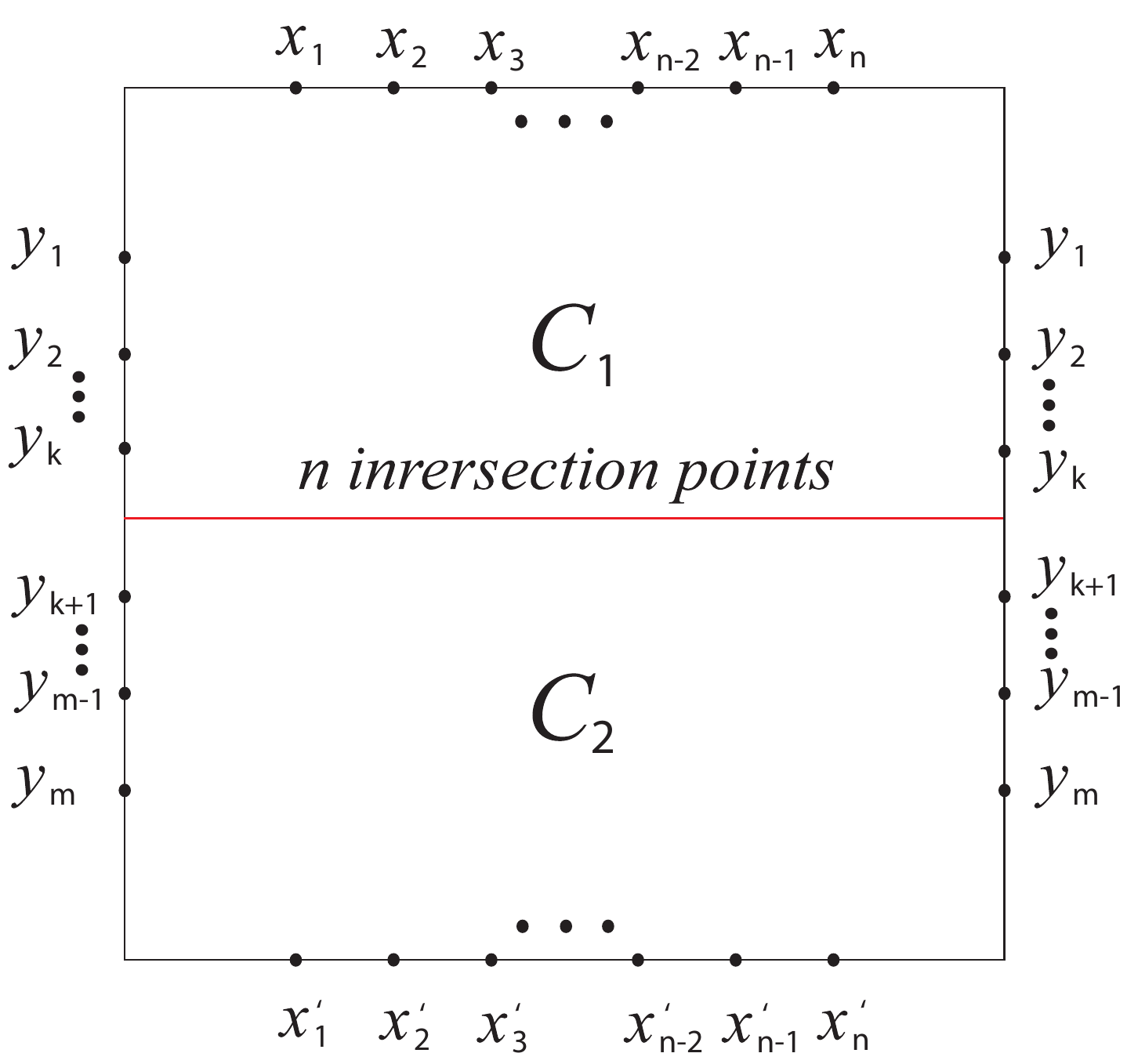}
\caption{Figure 5.2}
\label{fig:VerticalProduct}
\end{minipage}\hspace{10mm} 
\begin{minipage}{.35\textwidth}
\centering
\includegraphics[width=\linewidth]{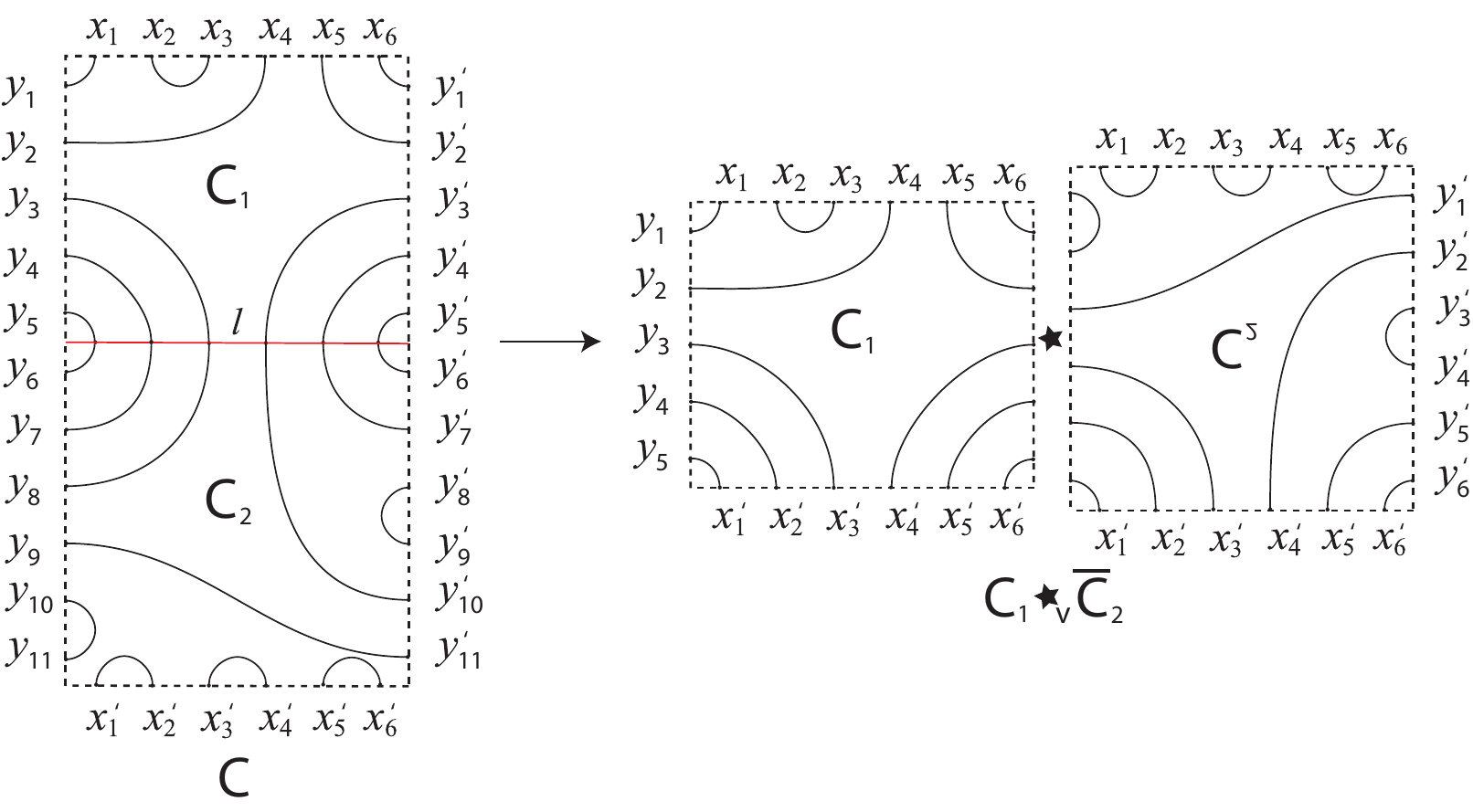}
\caption{Figure 5.3}
\label{fig:ExampleVerticalProd}
\end{minipage}
\end{figure}

\begin{example}
\emph{Consider Catalan states }$C=C^{\left( m\right) }$ \emph{(with no
nesting) shown in Figure~\ref{fig:SpecialCase3m}, we have }%
\begin{equation*}
C^{(m)}(A)=\left\{ 
\begin{tabular}{lll}
$A\frac{(A^{2}+A^{-2})^{m+1}-1}{(A^{2}+A^{-2})^{2}-1}$ & if & $m$ is odd \\ 
$(A^{2}+A^{-2})\frac{(A^{2}+A^{-2})^{m}-1}{(A^{2}+A^{-2})^{2}-1}$ & if & $m$
is even%
\end{tabular}%
\right. .
\end{equation*}%
\emph{The formula follows by induction }%
\begin{equation*}
C^{(2k+1)}(A)=A(1+(A^{2}+A^{-2})C^{(2k)})\text{ \emph{and} }%
C^{(2k)}=A^{-1}(A^{2}+A^{-2})C^{(2k-1)}.
\end{equation*}%
\emph{We observe that not all roots of }$C^{\left( m\right) }\left( A\right) 
$\emph{\ are roots of unity.} \emph{Furthermore, the polynomial }$C\left(
A\right) $\emph{\ is symmetric, but it is not a product of cyclotomic
polynomials.}
\end{example}

\begin{figure}[h]
\centering%
\begin{minipage}{.5\textwidth}
\centering
\includegraphics[width=\linewidth]{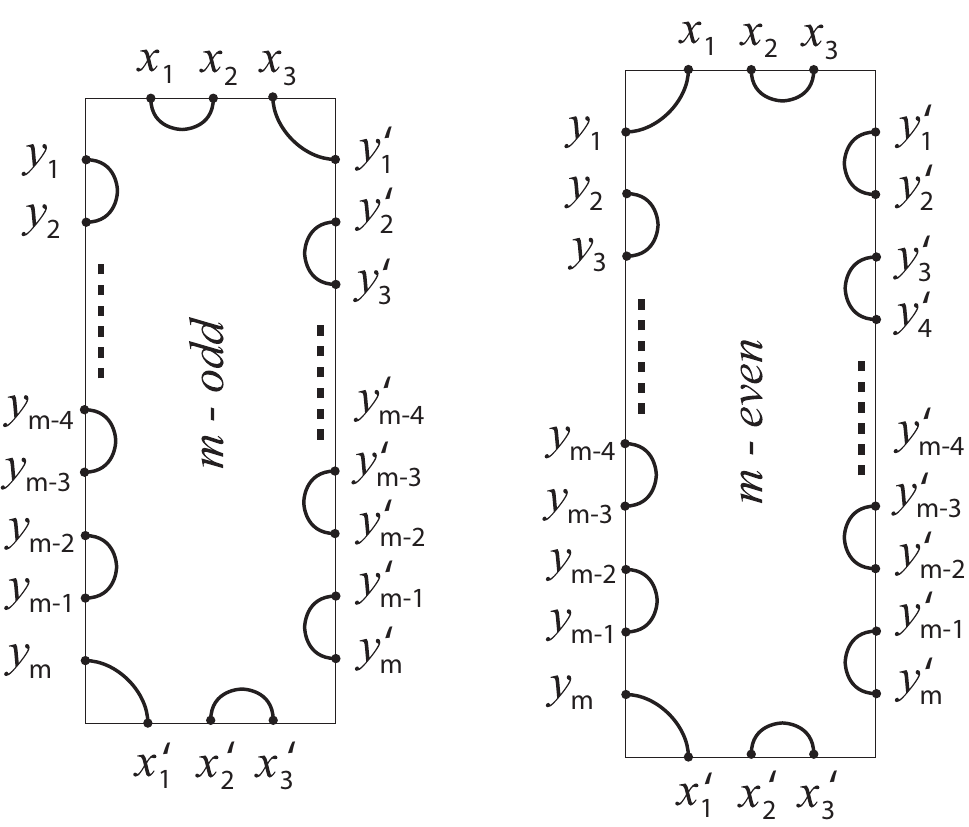}
\caption{Figure 5.4}
\label{fig:SpecialCase3m}
\end{minipage}\hspace{10mm} 
\begin{minipage}{.25\textwidth}
\centering
\includegraphics[width=\linewidth]{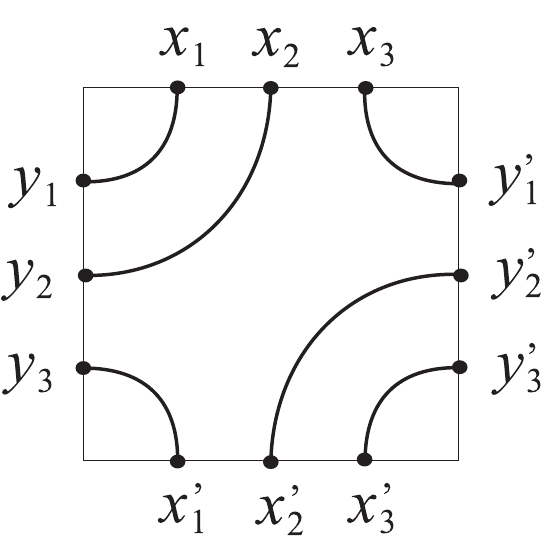}
\caption{Figure 5.5}
\label{fig:Concatenation}
\end{minipage}
\end{figure}

\begin{example}
\emph{Consider the Catalan state }$C$\emph{\ as in Figure~\ref%
{fig:Concatenation}. By Theorem \ref{FactorizationThm}}$,%
C(A)=A(1+A^{-4}+A^{-8})=A[3]_{A^{-4}}$\emph{. Thus, for }$C^{k}=C\ast
_{v}C\ast _{v}...\ast _{v}C$, \emph{\ we have }$C^{k}(A)=A^{k}\left(
[3]_{A^{-4}}\right) ^{k}$\emph{. Notice that for }$k>1$,\emph{\ it does not
satisfy condition (ii) of Theorem \ref{PolynomialCond}.}\medskip
\end{example}

\begin{corollary}
For a realizable $C\in \mathrm{Cat}\left( m,3\right) $, the coefficients $%
C\left( A\right) $ $($up to a power of $A)$ is a product of $[2]_{A^{4}}$, $%
[3]_{A^{4}}$, and $\frac{(A^{2}+A^{-2})^{2k}-1}{(A^{2}+A^{-2})^{2}-1}$. In
particular, $C(A)$ is symmetric $($palindromic$)$, its highest and the lowest
coefficients are equal to $1$, and there are no gaps. Furthermore, if $C$ has
a horizontal line cutting it in exactly $3$ points, then $C\left( A\right) $ 
$($up to a power of $A)$ is a product of powers of $[2]_{A^{4}}$, $%
[3]_{A^{4}}$.
\end{corollary}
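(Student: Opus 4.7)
The plan is to combine the two calculational tools already developed in the paper: the factorization $C(A)=C_{1}(A)\overline{C}_{2}(A)$ at horizontal lines cutting $C$ in exactly $3$ points (Theorem~\ref{FactorizationThm}), and, for pieces in $\mathrm{Cat}_{F}$, the expression of the coefficient as a product of $q$-multinomial vertex weights via Theorem~\ref{MainTheorem} and the Product Formula (Theorem~\ref{RootedProduct}(iii)). The genuine new content is a structural classification of the width-$3$ atoms that resist further size-$3$ cross-section splitting.

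\emph{Step 1 (cross-section decomposition).} Iterating Theorem~\ref{FactorizationThm} along every interior horizontal cross-section of size $3$ yields
\[
C(A)\;=\;A^{\alpha}\prod_{i=1}^{r}C^{[i]}(A),
\]
with each $C^{[i]}\in\mathrm{Cat}(m_{i},3)$ admitting no further interior size-$3$ cross-section and $\alpha\in\mathbb{Z}$ tracking the $A$-shifts induced by the reflections $\overline{\ \ }$ in Theorem~\ref{FactorizationThm}. The task reduces to identifying each atom $C^{[i]}$.

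\emph{Step 2 (through-strand atoms).} If $C^{[i]}$ contains an arc passing between its top and bottom, reflect vertically if necessary so that $C^{[i]}\in\mathrm{Cat}_{F}(m_{i},3)$. Theorem~\ref{MainTheorem} together with Theorem~\ref{RootedProduct}(iii) then writes
\[
C^{[i]}(A)\;=\;A^{\beta_{i}}\prod_{v\in V(\mathcal{T}(C^{[i]}))}W_{A}(v).
\]
Under the combined restrictions that $n=3$ and that $C^{[i]}$ has no interior size-$3$ cross-section, the dual tree $\mathcal{T}(C^{[i]})$ has branching at most $3$ at every vertex, and a direct check of the possible subtree size vectors shows that each non-trivial $W(v)$ is one of the $q$-multinomials $\binom{2}{1,1}_{q}=[2]_{q}$, $\binom{3}{2,1}_{q}=[3]_{q}$, or $\binom{3}{1,1,1}_{q}=[3]_{q}[2]_{q}$. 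Thus each such atom contributes, up to a power of $A$, only $[2]_{A^{4}}$ and $[3]_{A^{4}}$ factors.

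\emph{Step 3 (zig-zag atoms).} The remaining atoms have no top-to-bottom arc. A case analysis on the innermost top cup (together with the symmetric one on the bottom), combined with the realizability criterion of \cite{DLP}, pins these atoms down as the zig-zag states $C^{(m_{i})}$ of Figure~\ref{fig:SpecialCase3m} (up to horizontal and vertical reflection). Example~5.2 then supplies the ladder factor $\frac{(A^{2}+A^{-2})^{2k}-1}{(A^{2}+A^{-2})^{2}-1}$.

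\emph{Step 4 (assembly).} Multiplying the atomic contributions produces the asserted factorization of $C(A)$. Palindromicity, leading and trailing coefficients equal to $1$, and absence of internal zeros propagate because each of $[2]_{A^{4}}$, $[3]_{A^{4}}$, and the ladder polynomial enjoys all three of these properties, and products preserve them. For the ``furthermore'' clause, the hypothesis of an interior horizontal cross-section of size $3$ guarantees that Step~1 is nontrivial; every resulting atom then has its top or bottom consisting of cut endpoints of genuine through-arcs within the atom, so every atom falls into Step~2 and no zig-zag factor survives, yielding $C(A)$ as a product of powers of $[2]_{A^{4}}$ and $[3]_{A^{4}}$ only.

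\emph{Main obstacle.} Two case analyses carry the combinatorial weight. In Step~2 one needs to rule out multinomials such as $\binom{4}{2}_{q}$ for $\mathcal{T}(C^{[i]})$, which amounts to showing that the no-cross-section and width-$3$ constraints jointly restrict the subtree size vectors to the three listed above. In Step~3 one needs to identify every no-cross-section, no-through-arc, width-$3$ Catalan state with a zig-zag $C^{(m_{i})}$. Both I would handle by induction on $m_{i}$ using the innermost-cup ideas of Proposition~\ref{RealizableStates} and the realizability bound from \cite{DLP}; the detailed case-checking is where the genuine content of the corollary lives.
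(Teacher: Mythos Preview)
Your overall strategy---iterate Theorem~\ref{FactorizationThm} and then analyze the irreducible width-$3$ pieces---is exactly what the paper has in mind by ``careful case analysis using Theorem~\ref{FactorizationThm}.'' But two of your steps do not quite go through as written.

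First, in Step~2 you invoke the Product Formula (Theorem~\ref{RootedProduct}(iii)) for atoms in $\mathrm{Cat}_{F}(m_i,3)$. That formula is proved only for plane rooted trees \emph{without} a delay function, i.e.\ for Catalan states with returns on the ceiling only. A state in $\mathrm{Cat}_{F}(m_i,3)$ may still have side returns, and then $\mathcal{T}(C^{[i]})$ carries a nontrivial delay; for such trees $Q(T,f)$ need not factor as $\prod_v W(v)$, and in particular your list $\{[2]_q,[3]_q,[2]_q[3]_q\}$ of possible vertex weights is not a priori the right object to look at. To make Step~2 work you must either (a) show that a width-$3$ atom in $\mathrm{Cat}_F$ with no interior size-$3$ cross-section has no side returns (this is part of the genuine case analysis), or (b) compute $Q(T,f)$ directly for the finitely many shapes that arise.

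Second, your Step~2/Step~3 dichotomy is ``has a top-to-bottom arc'' versus ``has none,'' but the dichotomy you actually need is ``lies in $\mathrm{Cat}_F$ after a reflection'' versus ``has returns on both ceiling and floor.'' These do not coincide: an atom can sit in $\mathrm{Cat}_F$ (three distinct arcs leaving the floor) yet have all three of those arcs terminate on the sides, so no through-arc exists. Your Step~4 argument for the ``furthermore'' clause implicitly uses the correct dichotomy (after one size-$3$ cut both pieces are in $\mathrm{Cat}_F$ by Theorem~\ref{FactorizationThm}, so no zig-zag atom can appear), but then feeds those pieces back into Step~2, which as written requires a through-arc. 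Rewriting Step~2 to cover all $\mathrm{Cat}_F$ atoms---and handling the delay issue above---is where the actual case-checking of the corollary lives.
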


\begin{proof}
Our proof follows by a careful case analysis using Theorem \ref{FactorizationThm}.
\end{proof}

\medskip

We note that if $n=4$ (that is, $C\in \mathrm{Cat}\left( m,4\right)$) then $%
C\left( A\right) $ is not necessarily symmetric. For example, for the
Catalan state in \textrm{Figure}~\ref{fig:PosetB}, we have 
\begin{equation*}
C\left( A\right) =1+2A^{4}+A^{8}+A^{12}.
\end{equation*}

\section{Acknowledgments}

J. H. Przytycki was partially supported by the Simons Collaboration
Grant-316446 and CCAS Dean's Research Chair award. Authors would like to
thank Ivan Dynnikov and Krzysztof Putyra for many useful
computations/discussions.

\begin{equation*}
\begin{tabular}{ll}
Mieczyslaw K. Dabkowski & Jozef H. Przytycki \\ 
Department of Mathematical Sciences & Department of Mathematics \\ 
University of Texas at Dallas & The George Washington University \\ 
Richardson TX, 75080 & Washington, DC 20052 \\ 
\emph{e-mail}: \texttt{mdab@utdallas.edu} & \emph{e-mail}: \texttt{%
przytyck@gwu.edu} \\ 
& and University of Gda\'{n}sk%
\end{tabular}%
\end{equation*}


\begin{thebibliography}{99}
\bibitem{Bulock} D.~Bullock, Rings of \textrm{SL}$_{2}\left( \mathbb{C}%
\right) $ characters and the Kauffman bracket skein module, \emph{Comment.
Math. Helv.} \textbf{72}(4), 1997, pp. 521--542; E-print: \texttt{http://arxiv.org/abs/q-alg/9604014}.

\bibitem{BP} D.~Bullock, J.H.~Przytycki, Multiplicative Structure of
Kauffman Bracket Skein Module Quantizations, \emph{Proceedings of the
American Mathematical Society}, \textbf{128}(3), 2000, pp. 923-931;\\
E-print: \ {\tt http://front.math.ucdavis.edu/math.QA/9902117}.

\bibitem{CMPWY1} Z.~Cheng, S.~Mukherjee, J.H.~Przytycki, X.~Wang, S.Y.~Yang,
Realization of plucking polynomials, \emph{Journal of Knot Theory and Its
Ramifications}, \textbf{26}(2), 2017, 1740016 (9 pages).

\bibitem{CMPWY2} Z. Cheng, S. Mukherjee, J.H.~Przytycki, X.~Wang, S.Y.~Yang, 
\emph{Strict unimodality of }$q$\emph{-polynomials of rooted trees}, \emph{%
Journal of Knot Theory and Its Ramifications}, to appear,  E-print: \texttt{%
arXiv:1601.03465}.

\bibitem{CMPWY3} Z.~Cheng, S.~Mukherjee, J.H.~Przytycki, X.~Wang, S.Y.~Yang, 
\emph{Rooted Trees with the Same Plucking Polynomial}; E-print: \texttt{%
arXiv:1702.02004}.

\bibitem{DLP} M.K.~Dabkowski, C.~Li, J.H.~Przytycki, Catalan states of
lattice crossing, \emph{Topology and its Applications}\textit{, }\textbf{182}
(2015), pp 1-15; E-print: \ {\tt arXiv:1409.4065 [math.GT]}.

\bibitem{FG} C.~Frohman, R.~Gelca, Skein Modules and the Noncommutative
Torus, \emph{Transactions of the American Mathematical Society}, \textbf{352}%
(10), 2000, pp. 4877-4888.

\bibitem{Haj} M.~Hajij, The colored Kauffman skein relation and the head and
tail of the colored Jones polynomial, Volume \textbf{26}(3), 2017, 1741002
(14 pages); E-print: \texttt{arXiv:1401.4537}.

\bibitem{H-P-2} J.~Hoste, J.~H.~Przytycki, The skein module of genus $1$
Whitehead type manifolds, \emph{Journal of Knot Theory and Its Ramifications}%
, \textbf{4}(3), 1995, pp. 411-427.

\bibitem{Kauffman} L.H.~Kauffman, \emph{On knots}, Annals of Math. Studies, 
\textbf{115}, Princeton University Press, 1987.

\bibitem{JHP} J.H.~Przytycki, Skein modules of $3$-manifolds, \emph{Bull.
Ac. Pol.}: \emph{Math}., \textbf{39}(1-2), 1991, pp. 91-100; E-print:\ 
\texttt{arXiv:math/0611797 [math.GT]}.

\bibitem{JP} J.H.~Przytycki, Fundamentals of Kauffman bracket skein modules, 
\emph{Kobe Math. J}\textit{.}, \textbf{16}(1), 1999, pp. 45-66; E-print:\ 
\texttt{arXiv:math/9809113 [math.GT]}.

\bibitem{JHP-1} J.H.~Przytycki, \emph{Teoria w{\c e}z{\l}\'ow i zwi{\c a}zanych z nimi struktur dystrybutywnych }(in Polish)\emph{\ }(\emph{Knot Theory and
distributive structures}), Wydawnictwo Uniwersytetu Gda\'{n}skiego, Second
Edition, Gda\'{n}sk 2016.

\bibitem{JHP-2} J.H.~Przytycki,${\bf \it q}$-polynomial invariant of rooted trees; {\it Arnold Mathematical Journal}, 2(4), 449-461, 2016; E-print: \ {\tt arXiv:1512.03080 [math.CO]}.

\bibitem{PS} J.H.~Przytycki, A.S.~Sikora, On skein algebras and \textrm{Sl}$%
_{2}\left( 
\mathbb{C}
\right) $-character varieties, \emph{Topology}, \textbf{39}(1), 2000, pp.
115-148; E-print: \texttt{http://front.math.ucdavis.edu/q-alg/9705011}.

\bibitem{SY} S.~Yamada, A topological invariant of spatial regular graphs, 
\emph{Proc. Knots, }90, De Gruyter, Berlin, 1992, pp. 447-454.
\end{thebibliography}
\end{document}